\let\vec\undefined
\theoremstyle{definition}  
\newtheorem{lemma}{Lemma}[section]
\newtheorem{corollary}{Corollary}[section]
\theoremstyle{plain}
\newtheorem{remark}{Remark}
\newtheorem{theorem}{Theorem}[section]
\xpatchcmd{\proof}{\itshape}{\normalfont\proofnameformat}{}{}
\newcommand{\proofnameformat}{\bfseries}
\newcommand{\E}{{\mathbb E}}
\newcommand{\R}{{\mathbb R}}
\newcommand{\PP}{\mathbb{P}}
\renewcommand{\b}[1]{\boldsymbol{\mathbf{#1}}}
\newcommand{\vec}[1]{\b{#1}}
\newcommand{\eps}{\epsilon}
\newcounter{rcnt}[section]
\def\argmin{\mathop{\rm argmin}}
\def\argmax{\mathop{\rm argmax}}
\newcommand{\QQ}{\mathbb Q}
\newcommand{\cA}{{\mathcal A}}
\newcommand{\cD}{{\mathcal D}}
\newcommand{\cE}{{\mathcal E}}
\newcommand{\cF}{{\mathcal F}}
\newcommand{\cG}{{\mathcal G}}
\newcommand{\cH}{{\mathcal H}}
\newcommand{\cM}{{\mathcal M}}
\newcommand{\cN}{{\mathcal N}}
\newcommand{\cR}{{\mathcal R}}
\newcommand{\cW}{{\mathcal W}}
\newcommand{\cX}{{\mathcal X}}
\newcommand{\Xn}{\mathbf{x}_n}
\newcommand{\GaussAvgsF}{\cW(\cF)}
\newcommand{\GaussAvgs}{\cW}
\newcommand{\EmpGaussAvgs}{\widehat{\cW}}
\newcommand{\inner}[1]{\langle #1 \rangle}
\newcommand{\norm}[1]{\|#1\|}
\newcommand{\bxi}{\boldsymbol{\xi}}
\newcommand{\by}{\boldsymbol{y}}
\newcommand{\reals}{\mathbb{R}}
\title{On the Minimal Error of Empirical Risk Minimization}
\author{Gil Kur\\ MIT \and Alexander Rakhlin \\MIT}
\date{}
\begin{document}

\maketitle

\begin{abstract}
     We study the minimal  error of the Empirical Risk Minimization (ERM) procedure in the task of regression, both in the random and the fixed design settings. Our sharp lower bounds shed light on the possibility (or impossibility) of adapting to simplicity of the model generating the data. In the fixed design setting, we show that the error is governed by the global complexity of the entire class. In contrast, in random design, ERM may only adapt to simpler models if the local neighborhoods around the regression function are nearly as complex as the class itself, a somewhat counter-intuitive conclusion. We provide sharp lower bounds for performance of ERM for both Donsker and non-Donsker classes. We also discuss our results through the lens of recent studies on interpolation in overparameterized models.
\end{abstract}

\section{Introduction}

An increasing number of machine learning applications employ flexible overparameterized models to fit the training data. Theoretical analysis of such `overfitted' solutions has been a recent focus of the learning community. It is conjectured that the use of large overparameterized neural networks makes the loss landscape amenable to optimization through local search methods, such as stochastic gradient descent. It is also hypothesized that implicit regularization, arising from the choice of the optimization algorithm and the neural network architecture, mitigates the large complexity and ensures that the `overfitted' solutions generalize. 

Suppose a `simple' class $\cH$ of models captures the relationship between the covariates $X$ and the response variable $Y$. Inspired by the use of overparameterized models, we may take a much larger class $\cF\supset \cH$ for computational or other purposes (such as lack of explicit description of $\cH$) and minimize training loss over this larger class. It is natural to ask whether the learning procedure can \textit{adapt} to the fact that data comes from a simple model $f_0\in\cH$, in the sense that the prediction error depends on the statistical complexity of $\cH$ rather than $\cF$. We do have positive examples of this type: the least squares solution (that is, empirical risk minimization with respect to square loss) over the class of all convex functions $\cF$ on a convex compact subset of $\mathbb{R}^d$ (with $d\leq 4$) automatically enjoys the faster ``parametric'' rate $\Tilde{O}(k/n)$ of convergence to the true regression function $f_0\in\cH$ if $f_0$ is a piece-wise linear convex function with $k$ pieces. This rate should be contrasted with the slow non-parametric rate $\Theta(n^{-4/(d+4)})$ when the true regression function is `complex' and cannot be approximated well by a piece-wise linear convex function. 

How generic is this phenomenon of automatic adaptivity of empirical minimizers to simplicity of the true model? An affirmative answer would lend credibility to the practice of taking large models, whereas a negative answer would necessitate the study of conditions that can make such adaptivity possible.

This paper studies the fundamental limits of adaptivitiy of empirical risk minimization (ERM) in the setting of nonparametric regression (or, prediction with square loss and a well-specified model), in both random and fixed design. In contrast with the standard minimax approach to lower bounds, which may hide the true performance of ERM on simple models, we focus on lower bounds that hold for \textit{any} (rather than the worst-case) regression function in a given class. In the fixed design setting, we show that---informally speaking---for rich classes $\cF$, dependence on the global statistical complexity of the class is unavoidable, as it controls the error of ERM for any true regression function $f_0$, no matter how `simple' it is. In contrast, in the random design case, the situation is more subtle. Somewhat counter-intuitively, we show that for rich classes $\cF$, adaptation to the simplicity of $f_0$ may only be possible if the local neighborhood of $f_0$ in $\cF$ is nearly as rich as the class $\cF$. This finding can be viewed through the lens of recent results on interpolation \citep{belkin2019does,belkin2018overfitting,bartlett2020benign,liang2020just}. In these papers, the solutions can be seen as `simple-plus-spiky' \citep{wyner2017explaining} with spikes responsible for fitting the training data without affecting the error with respect to the population. Since in these models there are enough degrees of freedom to fit any noisy data, the effective function classes have rich local neighborhoods. In such cases, it is still possible that `overfitting' to the training data does not result it large out-of-sample error. Conversely, we show that---again, informally speaking---if $f_0$ is embedded in a local neighborhood in $\cF$ with low complexity, the empirical minimizer will necessarily be attracted to a solution far away from $f_0$ with respect to the out-of-sample loss. This finding initially appeared counter-intuitive to the authors.

\section{Formal Model}
\label{sec:formal}

We now present the formal model. Let $\cF$ be a \textit{convex} class of real-valued functions on some domain $\cX$. We aim to recover $f_0 \in \cF$ based on $n$ samples $Y_i=f_0(X_i)+\xi_i$, $i=1,\ldots,n$, under the assumption $f_0 \in\cF$ and  $\xi_1,\ldots,\xi_n \overset{i.i.d.}{\sim} N(0,1)$. In the random design setting, $X_1,\ldots,X_n\overset{i.i.d.}{\sim} \PP$, where $\PP$ is some unknown distribution on $\cX$, while in the fixed design setting $X_1,\ldots,X_n$ are some fixed points in $\cX$.

The Least Squares Estimator, or ERM with respect to square loss, is defined as
\begin{equation}\label{Eq:LSE}
    \hat{f}_n =   \Psi\left(\argmin_{f \in\cF}~ \sum_{i=1}^n (Y_i-f(X_i))^2\right),
\end{equation}
where $\Psi$ is a function that selects a particular solution in the set of possible minimizers (for example, a minimal norm solution).

One of the most important questions regarding ERM is its statistical performance as compared to other estimators, defined as maps from $\{(X_i,Y_i)\}_{i=1}^{n}$ to $\cF$ (or to $\reals^\cX$ for \textit{improper} methods). While there are multiple ways of measuring the statistical performance, perhaps the most popular is the \emph{minimax risk} \citep{tysbakovnon}, defined in the random design case for any estimator $\bar{f}_n$ as

$$
    \cR(\bar{f}_n, \cF,\PP) := \sup_{f_0\in\cF} \E_{x,\xi} \int (\bar{f}_n((X_1,Y_1),\ldots,(X_n,Y_n)) - f_0)^2d\PP,
$$
where $\E_{x,\xi}$ denotes expectation over the training data and the integral represents the expected out-of-sample performance with respect to $\PP$. One can also write this measure of performance as excess square loss
$$\sup_{f_0\in\cF} \E_{x,\xi} \E_{(X,Y)} (\bar{f}_n(X)-Y)^2 - \E_{(X,Y)}(f_0(X)-Y)^2.$$
We say that the ERM $\hat{f}_n$ is \emph{minimax optimal}, if for for all $n \geq 0$, 
\[
    \cR(\hat{f}_n, \cF,\PP) \lesssim \inf_{\bar{f}_n} \cR(\bar{f}_n, \cF,\PP),
\]
where $\lesssim$ denotes less or equal up to a constant that only depends on $\PP,\cF$.  The quantity $\inf_{\bar{f}_n} \cR(\bar{f}_n, \cF,\PP)$ is known as the \emph{minimax rate} for $(\cF,\PP)$. In the fixed design setting, the risk measure is defined in an analogous way, except that instead of drawing $n$ i.i.d. points from $\PP$, we consider a sequence of measures that are supported uniformly on  $n$ points. 

Clearly, the definitions of the risk and the minimax optimality measure ``the worst case scenario"  of a given estimator, and may hide the true statistical performance of the ERM in real-life applications (cf. \citep{bellec2017optimistic}). For example, as mentioned in the introduction, if $f_0$ is known to belong to a smaller class $\cH$, the relevant quantity is
\[
    \cR_{\cH}(\hat{f}_n,\cF,\PP):= \sup_{f_0 \in \cH}\E\int (\hat{f}_n - f_0)^2d\PP,
\]
 which may be significantly smaller than the minimax risk. We remark that the ERM $\hat{f}_n$ is still defined over $\cF$, due to computational or other considerations.
As an example, consider linear regression in $\R^d$ when the true coefficient vector is sparse, i.e. supported on $ k \ll d$ coordinates. Then, due to computational considerations, it is standard to replace the original problem of minimizing square loss over sparse vectors in $\R^d$ by minimization over a larger $\ell_1$ ball in $\R^d$ (the Lasso procedure).

The second example was already briefly mentioned in the introduction, and we expand on it here. Let $\cF_d$ be the family of convex $1$-Lipschitz functions on $\cX=[0,1]^d$, and let $\PP= \mathrm{Unif}(\cX)$. The subset $\cH_d$ (of `simple functions') is the set of $1$-Lipschitz $k$-affine piece-wise linear functions with $k=\Theta(1)$. It is well known that ERM over $\cH_d$ is NP-hard since the problem is highly non-convex; moreover, even estimating the number of pieces is computationally hard (cf. the recent paper \cite{ghosh2019max} for more details). In contrast, ERM over $\cF_d$ can be efficiently computed  \citep{ghosh2019max}. While the minimax rate for $(\cF_d,\PP_d)$ is $\Theta(n^{-4/(d+4)})$ \citep{dudley1999uniform,bronshtein1976varepsilon}, it was proved recently in \citep{kur2020suboptimality} that the risk of ERM is $\Tilde{\Theta}_d(\max\{n^{-2/d},n^{-4/(d+4)}\})$, which is minimax-suboptimal when $d\geq 5$. Furthermore, it was shown in \citep{han2016multivariate,feng2018adaptation} that
\begin{equation}\label{Eq:1}
    \cR_{\cH_d}(\hat{f}_n,\cF_d,\PP_d) \lesssim  \Tilde{O}(\max\{n^{-4/d},n^{-1}\}),
\end{equation}
which is \emph{significantly} smaller than both the risk of ERM and the minimax rate. When the ERM (or MLE) satisfies such improved bounds, we say that it exhibits \emph{adaptation} (cf. \citep{feng2018adaptation,kim2018adaptation,samworth2018recent,han2019isotonic,kur2020suboptimality}).

In this paper we answer the two following questions: Does there exist a \emph{uniform} lower bound on the \emph{minimal error} 
\[
    \inf_{f_0 \in \cF} \E_{x,\xi} \int (\hat{f}_n - f_0)^2d\QQ 
\]
of ERM $\hat{f}_n$, 
where $\QQ$ is either fixed or random design measure? Does the richness of the entire class $\cF$ affect the minimal error, or is there a more refined notion of complexity that governs its behavior?

\section{Main Results}

We start with definitions. For $n$ points $\mathbf{x}_n:= \{x_1,\ldots,x_n\}$ in $\cX$ and $\cG \subseteq \cF$, we define the  Gaussian averages of $\cG$ as
\[
    \EmpGaussAvgs(\cG):= \E_{\xi}\sup_{f\in \cG} \frac{1}{n} \sum_{i=1}^{n} \xi_i f(x_i),~~~~ 
    \GaussAvgs(\cG):= \E \EmpGaussAvgs(\cG).
\]
For a measure $\QQ$ on $\cX$ and $f:\cX \to \R$ we denote by $\| f\|_{\QQ}$ the $L_2(\QQ)$ norm of $f$. Finally, for any $\QQ$, $f \in \cF$, and $ r \geq 0$ we denote by $B_{\QQ}(f,r):= \{g \in \cF:\|g-f\|_{\QQ} \leq r \}$, the intersection of the $L_2(\QQ)$ ball around $f$ and the class $\cF$.

\subsection{Fixed Design}
We now state our sharp lower bound for the fixed design error, for simplicity of exposition under the assumption of uniform boundedness of $\cF$ (the general statement is given below in Lemma~\ref{Thm:fixed}).
\begin{corollary}
\label{cor:fixed_design}
    Let $\PP_n$ be the empirical measure on some $n$ points in $\cX$, and assume  $\cF\subseteq [-1,1]^\cX$ is convex. Then the minimal error of ERM over $\cF$ satisfies
\[
    \inf_{f_0 \in \cF} \E_{\xi}\int (\hat f_n - f_0)^2 d\PP_n \geq  64^{-1}(\EmpGaussAvgs(\cF)-Cn^{-1})^2,
\]
where $C$ is some positive absolute constant.
\end{corollary}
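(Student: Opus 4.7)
Plan. The proof combines the Hilbert-space characterization of the LSE with a short one-dimensional quadratic argument.

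First I would rewrite the problem as a projection: setting $\hat w := \hat f_n - f_0$ and $\cG := \cF - f_0$, convexity of $\cF$ makes $\hat w$ the $L_2(\PP_n)$-projection of the noise $\xi$ onto the convex set $\cG$, which contains $0$ since $f_0 \in \cF$. The variational inequality for this projection, $\langle \xi - \hat w,\,g - \hat w\rangle_n \le 0$ for all $g \in \cG$, rearranges to the key estimate
\[
  \langle \hat w, g\rangle_n \;\ge\; \langle \xi, g\rangle_n - \langle \xi, \hat w\rangle_n + \|\hat w\|_n^2 \qquad (g \in \cG).
\]

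The crucial step is to apply this at $g = g^\dag$, an element of $\cG$ that attains $M := \sup_{g\in\cG}\langle \xi, g\rangle_n$ up to an additive $Cn^{-1}$ error (such a $g^\dag$ always exists, and this $\varepsilon$-argmax device is exactly what introduces the $Cn^{-1}$ slack in the statement). The uniform bound $\cF \subset [-1,1]^\cX$ yields the \emph{deterministic} bound $\|g^\dag\|_n \le 2$, so Cauchy--Schwarz gives $\langle \hat w, g^\dag\rangle_n \le 2\|\hat w\|_n$ pointwise, and the display above becomes
\[
  2\|\hat w\|_n + \langle \xi, \hat w\rangle_n \;\ge\; M - Cn^{-1} + \|\hat w\|_n^2.
\]
Setting $B := \sqrt{\E\|\hat w\|_n^2}$ and $W := \EmpGaussAvgs(\cF)$, I would then take expectations and use three elementary bounds: (i) $\E M = \EmpGaussAvgs(\cG) = W$, because subtracting the fixed function $f_0$ from each element of $\cF$ does not alter Gaussian averages (as $\E\xi = 0$); (ii) $\E \|\hat w\|_n \le B$ by Jensen; and (iii) $\E\langle \xi, \hat w\rangle_n \le B$ by Cauchy--Schwarz in $L^2(\Omega)$ together with $\E\|\xi\|_n^2 = 1$. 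These combine to $3B \ge (W - Cn^{-1}) + B^2$, equivalently $B^2 - 3B + (W - Cn^{-1}) \le 0$. Using the algebraic identity $3 - \sqrt{9 - 4t} = 4t/(3+\sqrt{9-4t}) \ge 2t/3$ to solve this quadratic yields $B \ge (W - Cn^{-1})/3$, and hence $\E\|\hat w\|_n^2 = B^2 \ge (W - Cn^{-1})^2/9 \ge (W - Cn^{-1})^2/64$.

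The main technical obstacle is the step $\langle \hat w, g^\dag\rangle_n \le 2\|\hat w\|_n$: since $g^\dag$ is $\xi$-measurable one cannot push the expectation inside before applying Cauchy--Schwarz, and so the \emph{pointwise} bound $\|g^\dag\|_n \le 2$ is essential. This is precisely where the uniform boundedness hypothesis $\cF \subset [-1,1]^\cX$ enters the argument, and it is what must be replaced by a more delicate localization or stratification argument when this hypothesis is dropped in the general statement.
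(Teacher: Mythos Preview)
Your proof is correct, and it takes a genuinely more elementary route than the paper. The paper derives Corollary~\ref{cor:fixed_design} as a consequence of the general Lemma~\ref{Thm:fixed}, whose proof relies essentially on Chatterjee's concentration theorem (Lemma~\ref{Lem:Chat} and Corollary~\ref{Cor:integrate}) to replace $\E_\xi\langle\bxi,\hat f_n-f_0\rangle_n$ by the localized width $\EmpGaussAvgs(B_n(f_0,\widehat r))$ up to $O(\widehat r^{3/2}n^{-1/4}+n^{-1})$ errors, followed by a two-case analysis depending on whether $\EmpGaussAvgs(B_n(f_0,\widehat r))$ exceeds $\tfrac12\EmpGaussAvgs(B_n(f_0,1))+\widehat r^2/2$. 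You bypass Chatterjee's result entirely: the crude Cauchy--Schwarz estimate $\E_\xi\langle\bxi,\hat w\rangle_n\le B$ is enough once you retain the $\|\hat w\|_n^2$ term from the variational inequality and solve the resulting quadratic in $B$. In fact your $Cn^{-1}$ slack is not even needed---taking the supremum over $g\in\cG$ directly in the display $\langle\hat w,g\rangle_n\ge\langle\bxi,g\rangle_n-\langle\bxi,\hat w\rangle_n+\|\hat w\|_n^2$ and then bounding $\sup_g\langle\hat w,g\rangle_n\le 2\|\hat w\|_n$ gives $3B\ge W+B^2$ with no error term, hence $B^2\ge W^2/9$. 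What the paper's heavier machinery buys is the unbounded case (Lemma~\ref{Thm:fixed}) and the second, localized term $L_x(f_0)^2$, neither of which is accessible with your deterministic bound $\|g^\dag\|_n\le 2$; you correctly identify this as the step that would need to be replaced.
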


When $\cF$ is uniformly bounded (say, by $1$), a classical result in non-parametric statistics \citep{van2000empirical} and our theorem imply that
\[
    64^{-1}(\EmpGaussAvgs(\cF)-Cn^{-1})^2 \leq \inf_{f_0 \in \cF} \E_{\xi}\int (\hat f_n - f_0)^2 d\PP_n \leq  \underbrace{\sup_{f_0 \in \cF} \E_{\xi}\int (\hat f_n - f_0)^2 d\PP_n}_{=\cR(\hat{f}_n,\cF,\PP_n)}   \leq 2\EmpGaussAvgs(\cF).
\]
 Moreover, both of these bounds are tight, in the sense that they can be attained on certain families of functions, up to constants (cf. \cite{birge1998minimum,han2019isotonic}). 
Therefore, we conclude that in the \emph{fixed design} case, both the minimax risk and the minimal error of the ERM depend on the entire Gaussian complexity of $\cF$ (when it is convex and uniformly bounded). In particular, for the case of convex regression, Corollary~\ref{cor:fixed_design} recovers the rate in \eqref{Eq:1} (up to logarithmic factors) for the fixed design case, since with high probability the global complexity $\EmpGaussAvgs(\cF)$ is of the order $\max\{n^{-2/d}, n^{-1/2}\}$.

\subsection{Random Design}

We now turn to the random design setting, which is significantly more subtle. Before stating the result, we describe a direct proof strategy that fails. This approach would attempt to pass from the fixed design lower bound to the random design lower bound by relating the population and empirical norms $\norm{\cdot}_{\PP}$ and $\norm{\cdot}_{\PP_n}$, uniformly over the class. A statement of this type (which may be called ``upper isometry,'' in contrast with ``lower isometry'' studied, for instance, in \cite{mendelson2014learning}) could be derived under additional  assumptions on the geometry of $(\cF,\PP$), such as a small-ball condition \citep{mendelson2014learning}, Kolchinskii-Pollard entropy \citep{rakhlin2017empirical}, or an $\eps$-covering with respect to the $sup$-norm \cite{van2000empirical}. To the best of our knowledge, such upper-isometry statements can at best read
\[
    \|f - g\|^2_{\PP} \geq \frac{1}{2}\|f - g\|_{\PP_n}^2 - C \cdot\GaussAvgs(\cF)^2 \quad \forall f,g \in \cF,
\]
    where $C \geq 1$. Since $\GaussAvgs(\cF)^2$ is larger than the lower bound on the fixed design error, this technique does not appear to work.

    Moreover, a uniform lower bound of order $\GaussAvgs(\cF)^2$ in random design cannot be true in general. For instance, it was shown in a string of recent works \citep{liang2020multiple,belkin2019does,bartlett2020benign,tsigler2020benign} that it is possible to completely interpolate $Y_1,\ldots,Y_n$ (i.e. achieve zero empirical error) and still have a small generalization error (of order $n^{-c}$, for some $c \in (0,1)$), and even be minimax optimal (with an appropriate
    function $\Psi$ in Eq. \eqref{Eq:LSE}) . In these examples, because of the ability to interpolate any data, we know that $\GaussAvgs(B_n(f_0,1)) = \Theta(1)$; therefore, the lower bound in the fixed design case cannot be always true in random design.
    
    The last paragraph motivates the need to consider additional properties of the model $\cF$ and the underlying distribution $\PP$. With the interpolation examples in mind, we might hope that the relation between the global complexity of the class and complexity of local neighborhoods around the regression function $f_0$ may play a role in determining rates of convergence of ERM. To this end, for every $n$ and $f_0 \in \cF$, we define the following notion of complexity:
\begin{equation}\label{Eq:2}
    t_{n,\PP}(f_0,\cF):= \max\{t \in \R^{+}:\GaussAvgs(B_{\PP}(f_0,t)) \leq l_{\xi}\GaussAvgs(B_n(f_0,1))\},
\end{equation}
where $l_{\xi} \in (0,1)$ is a small absolute constant that will be chosen in the proofs. We remark that under the additional assumption of $\cF$ being uniformly bounded by $1$, we have that $\GaussAvgs(B_n(f_0,1)) \leq  \frac{1}{2}\GaussAvgs(B_n(f_0,2)) =  \frac{1}{2}\GaussAvgsF$, and thus we can replace the term on the right-hand side of \eqref{Eq:2} with global Gaussian averages $\GaussAvgsF$.

The quantity $t_{n,\PP}(f_0,\cF)$ is the maximal radius of the population ball around $f_0$ that has Gaussian complexity comparable to that of the entire class (in the uniformly bounded case), up to some absolute constant, or to a ball of constant radius within the class. As we show next, this local richness is necessary in order to avoid the rate being dominated by the global complexity of $\cF$. In the aforementioned interpolation examples we have both $ t_{n,\PP}(f_0,\cF) = O(n^{-c})$  and $\GaussAvgs(B_n(f_0,1)) = \Theta(1)$.  
 The last two relations must be true for any  $f_0 \in \cF$ for which ERM attains perfect fit to data, and yet a small generalization error of order $n^{-c}$.

We now state the main result of this paper for the random design setting, under the additional assumption of $\cF$ being uniformly bounded. Remarkably, $t_{n,\PP}(f_0,\cF)$ is the only additional quantity that we need to consider for a general uniform lower bound on a general family $\cF$. Specifically, we prove the following: 
\begin{theorem}\label{Thm:URandomDesign}
Let $\cF$ be a convex class of functions\footnote{We assume that $\cF$ is non-degenerate and contains at least two functions such that $\|f_1 - f_2\|_{\PP} \geq 1/2$.}  uniformly  bounded by one. Then for large enough $n$, the minimal error of ERM over $\cF$ is lower bounded as
\[
    \inf_{f_0 \in \cF}~ \frac{\E_{x,\xi}\int(\hat{f}_n - f_0)^2 d\PP}{\min\{\GaussAvgs(\cF)^2,t_{n,\PP}(f_0,\cF)^2\}} \geq c,
\]
where $c \in (0,1)$.
\end{theorem}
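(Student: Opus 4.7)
The plan is to apply the fixed-design lower bound of Corollary~\ref{cor:fixed_design} conditionally on the random design and then transfer the resulting empirical-norm statement to a population-norm statement via a localized empirical-process argument within the population ball $B_{\PP}(f_0, \tau)$. Writing $G := \GaussAvgs(\cF)$, $T := t_{n,\PP}(f_0, \cF)$, and $\tau := \min\{T, G\}$, the argument then turns on a dichotomy on whether the ERM remains inside $B_{\PP}(f_0, \tau)$ or escapes it.

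\textbf{Step 1 (constant-probability fixed-design lower bound).} Conditioning on any realization of $X_1, \ldots, X_n$, Corollary~\ref{cor:fixed_design} gives $\E_{\xi} \|\hat{f}_n - f_0\|_{\PP_n}^2 \gtrsim \EmpGaussAvgs(\cF)^2$. Since the quantity driving this bound is essentially the norm of the projection of the Gaussian noise onto the convex class, a Paley--Zygmund-type second-moment argument upgrades the in-expectation statement to: there is an absolute constant $c_0 > 0$ and a conditional noise event of probability $\geq c_0$ on which $\|\hat{f}_n - f_0\|_{\PP_n}^2 \geq c_1 \EmpGaussAvgs(\cF)^2$. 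A bounded-differences concentration for $\EmpGaussAvgs(\cF)$ then provides a design event of probability $1 - o(1)$ on which $\EmpGaussAvgs(\cF) \geq G/2$, so that $\|\hat{f}_n - f_0\|_{\PP_n}^2 \geq c_1 G^2/4$ with joint probability $\geq c_0 / 2$.

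\textbf{Step 2 (localized upper isometry).} The definition of $T$ and the footnote-simplification for uniformly bounded classes on the right-hand side of \eqref{Eq:2} give $\GaussAvgs(B_{\PP}(f_0, \tau)) \leq \GaussAvgs(B_{\PP}(f_0, T)) \leq l_\xi G$. Applying Bousquet's version of Talagrand's concentration inequality to the squared class $\{(f - f_0)^2 : f \in B_{\PP}(f_0, \tau) \cap \cF\}$, whose sup-norm is at most $4$ and whose $L^2(\PP)$-variance is at most $4\tau^2$, together with symmetrization and Ledoux--Talagrand contraction, yields an additional design event of probability $1 - o(1)$ on which
\[
    \sup_{f \in B_{\PP}(f_0, \tau) \cap \cF} \bigl| \|f - f_0\|_{\PP_n}^2 - \|f - f_0\|_{\PP}^2 \bigr| \leq C\bigl(\tau\,\GaussAvgs(B_{\PP}(f_0, \tau)) + \GaussAvgs(B_{\PP}(f_0, \tau))^2\bigr) + o(G^2).
\]
Using $\tau \leq G$, the right-hand side is at most $C(l_\xi + l_\xi^2) G^2 + o(G^2)$, which can be made $\leq c_1 G^2/8$ once $l_\xi$ is fixed as a sufficiently small absolute constant.

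\textbf{Step 3 (case analysis).} Work on the intersection of the three good events above. If $\hat{f}_n \notin B_{\PP}(f_0, \tau)$, then $\|\hat{f}_n - f_0\|_{\PP}^2 > \tau^2 = \min\{G^2, T^2\}$ directly. Otherwise $\hat{f}_n \in B_{\PP}(f_0, \tau)$, and Step~2 applied to $f = \hat{f}_n$ combined with Step~1 gives
\[
    c_1 G^2 /4 \leq \|\hat{f}_n - f_0\|_{\PP_n}^2 \leq \|\hat{f}_n - f_0\|_{\PP}^2 + c_1 G^2/8,
\]
so $\|\hat{f}_n - f_0\|_{\PP}^2 \geq c_1 G^2/8 \geq (c_1/8)\min\{G^2, T^2\}$. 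Either way, $\|\hat{f}_n - f_0\|_{\PP}^2 \gtrsim \min\{G^2, T^2\}$ on an event of probability $\Omega(1)$, and taking expectations finishes the proof.

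The main obstacle is establishing the localized upper isometry of Step~2 with the refined error scaling $\tau\,\GaussAvgs(B_{\PP}(f_0, \tau)) + \GaussAvgs(B_{\PP}(f_0, \tau))^2 \lesssim l_\xi G^2$, rather than the crude $\GaussAvgs(B_{\PP}(f_0, \tau)) \asymp l_\xi G$: only the sharper form is dominated by the fixed-design lower bound $\gtrsim G^2$, so that choosing $l_\xi$ small absorbs the deviation. Obtaining this refinement requires exploiting simultaneously the small Gaussian complexity and the small $L^2(\PP)$-diameter of the local class, in the spirit of the Bartlett--Bousquet--Mendelson local Rademacher complexity framework; it is the only nontrivial empirical-process ingredient in the argument and is precisely what encodes the radius $t_{n,\PP}$ in the statement.
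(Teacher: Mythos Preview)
The plan has a genuine gap at Step~2: the refined isometry bound you claim,
\[
\sup_{f\in B_{\PP}(f_0,\tau)} \bigl|\,\|f-f_0\|_{\PP_n}^2-\|f-f_0\|_{\PP}^2\bigr| \;\lesssim\; \tau\,\GaussAvgs\bigl(B_{\PP}(f_0,\tau)\bigr) + \GaussAvgs\bigl(B_{\PP}(f_0,\tau)\bigr)^2,
\]
is not obtainable from Bousquet's inequality together with symmetrization and Ledoux--Talagrand contraction, and is in fact false in general. Contraction for $\phi(x)=x^2$ uses the pointwise Lipschitz constant $2\|f-f_0\|_\infty\le 4$, so the symmetrized expectation is only $\lesssim \GaussAvgs(B_{\PP}(f_0,\tau))$, not $\tau\,\GaussAvgs(B_{\PP}(f_0,\tau))$; the $L_2(\PP)$-diameter $\tau$ never enters. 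A single-function check already kills the refinement: take $f-f_0=\mathbf{1}_A$ with $\PP(A)=\tau^2$; then $\|f-f_0\|_{\PP_n}^2-\tau^2$ has standard deviation $\asymp\tau/\sqrt{n}$ while $\GaussAvgs(\{f_0,f\})\asymp \tau/\sqrt{n}$, so the left side is $\asymp \GaussAvgs$ but your right side is $\asymp\tau\,\GaussAvgs$, smaller by a factor $\tau$. Plugging the achievable bound $\lesssim l_\xi G$ into Step~3 would require $l_\xi G\ll G^2$, i.e.\ $G\gg l_\xi$, which fails throughout the Donsker regime $G\asymp n^{-1/2}$. This is precisely the obstruction the paper flags in the paragraph preceding the theorem, where it also notes that Bousquet's inequality is vacuous here.

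The paper's proof does not attempt any norm isometry. Instead it argues by contradiction: if $\Pr(\hat f_n\in B_{\PP}(f_0,t_*))>1/2$, then on a good design event (i) Chatterjee's concentration forces $\inner{\bxi,\hat f_n-f_0}_n\approx\EmpGaussAvgs(B_n(f_0,\widehat r))$; (ii) the first-order optimality condition of ERM, tested against a coarse $L_2(\PP)$-net $\cM$ at scale $\asymp\sqrt{\GaussAvgsF}$, gives $\EmpGaussAvgs(B_n(f_0,\widehat r))\gtrsim \GaussAvgsF - \max_{g\in\cM}\inner{g-f_0,\hat f_n-f_0}_n - \sqrt{\GaussAvgsF}\,\widehat r$; and (iii) the \emph{inner-product} deviations $\inner{g-f_0,h-f_0}_n - \int(g-f_0)(h-f_0)\,d\PP$ over $g\in\cM$, $h\in B_\PP(f_0,t_*)$ are controlled using the form of Talagrand's inequality with the $\log(1+sU/V^2)$ factor, exploiting $V^2\lesssim t_*^2\le s_1^2\,\GaussAvgsF$ so that the tail beats a union bound over $|\cM|\le\exp(Cn\,\GaussAvgsF)$. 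Combining (i)--(iii) forces $\GaussAvgs(B_\PP(f_0,t_*))\gtrsim\GaussAvgsF$, contradicting the definition of $t_*$. The crucial structural difference from your plan is that the controlled quantities are \emph{bilinear} inner products (one factor from a finite net, one from the local ball), whose variance genuinely scales like $t_*^2$; for the \emph{squared} norms you need in Step~2, no such improvement is available.
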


\begin{remark}
Notably, Theorem~\ref{Thm:URandomDesign} holds under only convexity and uniform boundedness assumptions on the class $\cF$.  
Furthermore, one can easily design  a convex uniformly bounded family and an $f_0 \in \cF$ such that the ERM attains an error of order $t_{n,\PP}(f_0,\cF)^2 \ll \GaussAvgsF^2$ for all $n$ that are large enough (for completeness see Section \ref{Sec:SimpleExample}). Therefore, under no additional assumption on $\cF$, the above lower bound is sharp up to absolute constants.
\end{remark}

An almost immediate corollary of this theorem is the following key insight on the behavior of the ERM procedure in the random design setting:

\begin{corollary}
Let $\cF$ be convex and uniformly bounded by $1$. For any $f_0 \in \mathcal{\cF}$ such that $$ \underbrace{\E_{x,\xi} \int (\hat{f}_n - f_0)^2d\PP}_{:=E^2(f_0)} \ll \GaussAvgsF^2,$$
there \emph{must} exists some constant $t(f_0) \leq c_1 \cdot E(f_0) $ such that 
\[
    \GaussAvgs(B_{\PP}(f_0,t(f_0))) = \Theta(\GaussAvgsF),
\]
where $c_1 \in (0,1)$ is some absolute constant.
\end{corollary}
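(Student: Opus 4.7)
The corollary follows from Theorem~\ref{Thm:URandomDesign} by bookkeeping --- it is a matter of identifying which of the two terms inside the minimum is active. The plan is first to apply the theorem, then to use uniform boundedness to re-express $t_{n,\PP}(f_0,\cF)$ in terms of global Gaussian averages, and finally to choose $t(f_0)$ slightly above the threshold.

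Applying Theorem~\ref{Thm:URandomDesign} to the given $f_0$ yields
\begin{equation*}
    E^2(f_0) \;\geq\; c \cdot \min\bigl\{\GaussAvgs(\cF)^2,\ t_{n,\PP}(f_0,\cF)^2\bigr\}.
\end{equation*}
The hypothesis $E^2(f_0) \ll \GaussAvgs(\cF)^2$, interpreted as $E^2(f_0) < c\,\GaussAvgs(\cF)^2$ for all sufficiently large $n$, rules out the first term as the minimizer; otherwise we would have $E^2(f_0) \geq c\,\GaussAvgs(\cF)^2$, a contradiction. Hence the active term is $t_{n,\PP}(f_0,\cF)^2$, and rearranging gives $t_{n,\PP}(f_0,\cF) \leq E(f_0)/\sqrt{c}$.

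Next I would invoke the uniform-boundedness reduction recorded in the remark following \eqref{Eq:2}: $\GaussAvgs(B_n(f_0,1))$ and $\GaussAvgs(\cF)$ are of the same order, so the defining inequality of $t_{n,\PP}(f_0,\cF)$ is equivalent, up to replacing $l_\xi$ by an absolute-constant multiple, to $\GaussAvgs(B_\PP(f_0,t)) \leq \tilde{l}_\xi\,\GaussAvgs(\cF)$ for some $\tilde{l}_\xi \in (0,1)$. Set $t(f_0) := 2\,t_{n,\PP}(f_0,\cF)$, so $t(f_0) \leq c_1\,E(f_0)$ with $c_1 := 2/\sqrt{c}$. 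The maximality in the definition of $t_{n,\PP}(f_0,\cF)$ then forces
\begin{equation*}
    \GaussAvgs(B_\PP(f_0,t(f_0))) \;>\; \tilde{l}_\xi\,\GaussAvgs(\cF),
\end{equation*}
while the trivial inclusion $B_\PP(f_0,t(f_0)) \subseteq \cF$ delivers the matching upper bound $\GaussAvgs(B_\PP(f_0,t(f_0))) \leq \GaussAvgs(\cF)$. Combining these is exactly $\GaussAvgs(B_\PP(f_0,t(f_0))) = \Theta(\GaussAvgs(\cF))$.

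All the real work is absorbed into Theorem~\ref{Thm:URandomDesign}, so there is no substantial obstacle. The one point that deserves care is the possible discontinuity of the monotone map $t \mapsto \GaussAvgs(B_\PP(f_0,t))$: this is why I pass from the value $t_{n,\PP}(f_0,\cF)$ itself to the strictly larger radius $2\,t_{n,\PP}(f_0,\cF)$, which guarantees that the defining inequality is strictly violated and hence the Gaussian-averages lower bound is of the claimed order.
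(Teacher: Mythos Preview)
Your argument is correct and is exactly the ``almost immediate'' derivation the paper has in mind (the paper does not write out a proof). The one cosmetic mismatch is the value of $c_1$: your bookkeeping gives $c_1 = 2/\sqrt{c} > 1$, whereas the corollary as stated asserts $c_1 \in (0,1)$; this is an imprecision in the paper's statement rather than a flaw in your proof, since Theorem~\ref{Thm:URandomDesign} only guarantees $c \in (0,1)$.
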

Informally speaking, if ERM learns some $f_0 \in \cF$ at a rate faster than $ \GaussAvgsF^2$, then the local complexity of a population ball centered at $f_0$ with a very small radius must be as rich as the entire complexity of $\cF$. A more prescriptive recipe for guaranteeing such fast rates is an interesting direction of further work.

\subsection{Donsker and non-Donsker Classes}

The lower bounds stated thus far assumed little about the geometry of the class $\cF$ beyond convexity and global and local Gaussian averages. Under additional assumptions on the behavior of entropy $\log \cN(\eps,\cF,\PP)$ (defined as the logarithm of the smallest number of balls with respect to $L_2(\PP)$ of radius $\eps$ sufficient to cover $\cF$) or entropy with bracketing $\log \cN_{[]}(\eps,\cF,\PP)$ (defined as the logarithm of the smallest number of brackets $l_i,u_i \in \cF$ such that $l_i\leq u_i$, $\norm{l_i-u_i}_{\PP}\leq \eps$ and $\cF$ is contained in the union of the brackets), we can provide specific upper bounds on the Gaussian averages via chaining and other techniques. In particular, we say that a convex uniformly bounded $\cF$ is $\PP$-Donsker if $\log \cN_{[]}(\eps,\cF,\PP) \sim \eps^{-\alpha}$ with $\alpha \in (0,2)$ or if $\cF$ is parametric with $\log \cN_{[]}(\eps,\cF,\PP) \sim v\log (1/\eps)$ for some `dimension' $v$. In seminal works of \citep{birge1993rates} it was shown that for any $\PP$-Donsker class, the ERM is minimax optimal, i.e.  
$$
    \cR(\hat{f}_n,\cF,\PP)  \sim \inf_{\bar{f}_n}\cR(\bar{f}_n,\cF,\PP) \sim n^{-\frac{2}{2+\alpha}}.
$$ Note that for $\alpha \in (0,2)$ we have that $\GaussAvgsF \sim n^{-1/2}$.

The next result shows that without further assumptions we cannot learn \textit{any} function in a convex uniformly bounded $\PP$-Donsker class faster than a parametric rate. 
\begin{corollary}\label{Cor:Donsker}
    Let $\cF$ be a convex uniformly bounded $\PP$-Donsker class, and  let $X_1,\ldots,X_n \sim \PP$. Then
    \[
         n^{-1} \lesssim \inf_{f_0 \in \cF}\E\int (\hat{f}_n -f_0)^2 d\PP_n \sim \inf_{f_0 \in \cF}\E\int (\hat{f}_n -f_0)^2 d\PP  
    \]
\end{corollary}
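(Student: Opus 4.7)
The plan is to split the chain $n^{-1} \lesssim A \sim B$, with $A := \inf_{f_0}\E\|\hat f_n - f_0\|_{\PP_n}^2$ (in-sample) and $B := \inf_{f_0}\E\|\hat f_n - f_0\|_\PP^2$ (out-of-sample), into two independent tasks: a lower bound $A \gtrsim n^{-1}$ and a pointwise empirical-to-population equivalence that upgrades to $A \sim B$.

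For the first task, I would condition on $X = (X_1,\ldots,X_n)$ and invoke Corollary~\ref{cor:fixed_design} to get
\[
\inf_{f_0 \in \cF} \E_\xi \int (\hat f_n - f_0)^2\, d\PP_n \;\geq\; 64^{-1}\bigl(\EmpGaussAvgs(\cF) - Cn^{-1}\bigr)^2.
\]
It then suffices to lower bound $\EmpGaussAvgs(\cF)$ by order $n^{-1/2}$ on an event of constant probability. Using the non-degeneracy assumption of two functions $f_1,f_2 \in \cF$ with $\|f_1-f_2\|_\PP \geq 1/2$, the exact two-point identity $\EmpGaussAvgs(\{f_1,f_2\}) = \|f_1-f_2\|_{\PP_n}/\sqrt{2\pi n}$ combined with Hoeffding-type concentration of $\|f_1-f_2\|_{\PP_n}^2$ around $\|f_1-f_2\|_\PP^2$ (legitimate because $(f_1-f_2)^2$ is bounded) yields $\EmpGaussAvgs(\cF) \gtrsim n^{-1/2}$ with probability at least $3/4$ for $n$ large enough. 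Taking expectation over $X$ and using $\inf_{f_0}\E \geq \E\inf_{f_0}$ then delivers $A \gtrsim n^{-1}$. As a sanity check, $B \gtrsim n^{-1}$ follows along the same lines from Theorem~\ref{Thm:URandomDesign}, since a short chaining computation shows that under the Donsker entropy condition $t_{n,\PP}(f_0,\cF)$ stays bounded below by a constant for every $f_0$, so the minimum in the theorem is attained at $\GaussAvgs(\cF)^2 \gtrsim n^{-1}$.

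For the equivalence $A \sim B$, the plan is to prove that for each fixed $f_0 \in \cF$,
\[
\E\|\hat f_n - f_0\|_{\PP_n}^2 \;\asymp\; \E\|\hat f_n - f_0\|_\PP^2,
\]
and then take infima. The tool is localized uniform concentration of the empirical $L_2$ norm around the population $L_2$ norm on $\cF$: since $\cF$ is uniformly bounded and $\PP$-Donsker, the squared-difference class $\{(f-f_0)^2 : f \in \cF\}$ is again $\PP$-Donsker (a bounded Lipschitz image of a Donsker class), so standard localized Rademacher/Talagrand arguments give, at each scale $r$,
\[
\E \sup_{\substack{f \in \cF\\ \|f-f_0\|_\PP \leq r}} \bigl|(\PP_n - \PP)(f-f_0)^2\bigr| \;\lesssim\; \phi_n(r),
\]
with a fixed point $r_n^\star$ comparable to $\E\|\hat f_n - f_0\|_\PP^2$; a standard peeling/localization argument then converts this into the two-sided pointwise equivalence above.

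The hard part, I expect, is making this second step quantitative at the scale $n^{-1}$. The naive Donsker bound $\sup_{f\in\cF}|(\PP_n-\PP)(f-f_0)^2| = O(n^{-1/2})$ is far too crude, because $A$ and $B$ can themselves be as small as $n^{-1}$; one must instead localize inside a $\PP$-ball whose radius is controlled by the very risk we are trying to estimate. Executing this fixed-point argument cleanly---so that the deviation $|(\PP_n-\PP)(\hat f_n - f_0)^2|$ is absorbed into a small constant multiple of $\|\hat f_n - f_0\|_\PP^2$ rather than dominating it---is the technical heart of Corollary~\ref{Cor:Donsker}, and it depends crucially on the entropy assumptions $\log\cN_{[]}(\eps,\cF,\PP)\sim \eps^{-\alpha}$ with $\alpha\in(0,2)$ (or $v\log(1/\eps)$ in the parametric case) that single out the Donsker regime.
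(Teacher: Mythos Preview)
Your plan for the two lower bounds is essentially the paper's proof. The paper conditions on the sample, uses the Donsker entropy assumption to get $\EmpGaussAvgs(\cF)\sim\GaussAvgsF\sim n^{-1/2}$ with high probability (you reach the same conclusion via the two-point non-degeneracy argument, which is a harmless variant), and then applies Corollary~\ref{cor:fixed_design} to obtain $\inf_{f_0}\E\|\hat f_n-f_0\|_{\PP_n}^2\gtrsim n^{-1}$. For the out-of-sample infimum the paper does exactly what you label a ``sanity check'': it invokes Theorem~\ref{Thm:URandomDesign} and verifies $t_{n,\PP}(f_0,\cF)\gtrsim 1$ for every $f_0$ via the entropy-integral bound $\GaussAvgs(B_{\PP}(f_0,t))\lesssim t^{(2-\alpha)/2}n^{-1/2}$, so that the minimum in the theorem is attained at $\GaussAvgsF^2\sim n^{-1}$.

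Where your plan diverges from the paper is the second task. The paper's proof \emph{stops} after establishing the two lower bounds separately; there is no localized concentration or fixed-point argument relating $\|\hat f_n-f_0\|_{\PP_n}^2$ to $\|\hat f_n-f_0\|_{\PP}^2$ pointwise. The ``$\sim$'' in the displayed statement is handled only at the level of the common $n^{-1}$ lower bound, and the proof ends with ``the claim follows'' immediately after the $t_{n,\PP}\gtrsim 1$ computation. So what you identify as the technical heart of the corollary --- a peeling argument that absorbs $(\PP_n-\PP)(\hat f_n-f_0)^2$ into a small multiple of the population error at scale $n^{-1}$ --- is neither carried out in nor required by the paper's argument. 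Your ``sanity check'' is already the entire random-design half of the proof.
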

This lower bound is sharp, namely there are classical $\PP$-Donsker classes, such as the convex regression example mentioned in the introduction and Section~\ref{sec:formal}, where ERM can attain a parametric rate (up to logarithmic factors) when optimizing over all convex Lipschitz functions, but only for $d \leq 4$ which puts us in the Donsker regime.

For non-Donsker classes, i.e when $\alpha > 2$, the ERM procedure may not be optimal. One can show that 
$$n^{-\frac{2}{2+\alpha}}\lesssim \cR(\hat{f}_n,\cF,\PP) \lesssim n^{-\frac{1}{\alpha}}$$
and both of these bounds can be tight, up to logarithmic factors. Furthermore, one can show that $$n^{-\frac{2}{2+\alpha}}\lesssim \GaussAvgsF \lesssim n^{-\frac{1}{\alpha}}$$
and, again, both of these can be tight. Our next corollary shows that in this regime, the fixed-design error is at least of the order $\GaussAvgsF^2$, i.e. it is impossible to learn at a parametric rate in the non-Donsker regime.  
\begin{corollary}\label{Cor:NonDonsker}
Let $\cF$ be a convex uniformly bounded non-$\PP$-Donsker class, and  let $X_1,\ldots,X_n \sim \PP$. Then the following holds:
    \[
         n^{-\frac{4}{2+\alpha}} \lesssim \GaussAvgsF^2 \lesssim \inf_{f_0 \in \cF}\E\int (\hat{f}_n -f_0)^2 d\PP_n 
    \]
\end{corollary}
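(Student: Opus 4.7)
The claim splits into two inequalities. The right-hand inequality $\GaussAvgsF^2\lesssim \inf_{f_0}\E\!\int(\hat f_n-f_0)^2\,d\PP_n$ is essentially an integrated version of the fixed-design bound of Corollary~\ref{cor:fixed_design}. Since $\cF$ is convex and bounded by $1$, that corollary applies to the (random) empirical measure pointwise in $(X_1,\ldots,X_n)$, giving
\[
    \inf_{f_0\in\cF}\E_\xi\!\int(\hat f_n-f_0)^2\,d\PP_n \ \geq\ 64^{-1}\bigl(\EmpGaussAvgs(\cF)-Cn^{-1}\bigr)^2.
\]
Taking $\E_X$ of both sides, moving the $\inf_{f_0}$ outside via $\E_X\inf_{f_0}(\cdot)\leq \inf_{f_0}\E_X(\cdot)$ on the left, and invoking Jensen's inequality for $x\mapsto x^2$ on the right, one obtains $\inf_{f_0}\E\!\int(\hat f_n-f_0)^2\,d\PP_n \geq 64^{-1}(\GaussAvgsF-Cn^{-1})^2$. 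The Sudakov lower bound below will show $\GaussAvgsF\gtrsim n^{-2/(2+\alpha)}\gg n^{-1}$ (since $\alpha>2$ forces $2/(2+\alpha)<1/2$), so the additive $Cn^{-1}$ is absorbed into the constant and the right-hand side is $\gtrsim \GaussAvgsF^2$.

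\textbf{Sudakov lower bound on $\GaussAvgsF$.} The Gaussian process $Z_f=\tfrac1n\sum_i\xi_if(X_i)$ has intrinsic metric $d(f,g)=\|f-g\|_{\PP_n}/\sqrt n$. Sudakov minoration therefore yields, pointwise in $X_1,\ldots,X_n$,
\[
    \EmpGaussAvgs(\cF)\ \gtrsim\ \frac{\delta}{\sqrt n}\sqrt{\log \cN\bigl(\delta,\cF,\|\cdot\|_{\PP_n}\bigr)}\qquad\text{for every }\delta>0.
\]
The non-Donsker hypothesis $\log \cN_{[]}(\eps,\cF,\PP)\asymp \eps^{-\alpha}$ with $\alpha>2$ forces the same polynomial order on the ordinary $L_2(\PP)$ metric entropy, and by a standard uniform equivalence between $\|\cdot\|_{\PP_n}$ and $\|\cdot\|_\PP$ on a bounded convex class, this lower bound transfers to the empirical entropy with high $\PP$-probability. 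Picking the critical scale $\delta=n^{-1/(2+\alpha)}$, which balances $\delta/\sqrt n$ against $\sqrt{\delta^{-\alpha}}$, a direct calculation of exponents gives $\EmpGaussAvgs(\cF)\gtrsim n^{-2/(2+\alpha)}$ on the favorable event, and hence $\GaussAvgsF\gtrsim n^{-2/(2+\alpha)}$ after taking expectation. Squaring yields $n^{-4/(2+\alpha)}\lesssim \GaussAvgsF^2$.

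\textbf{Main obstacle.} The two pieces above are routine once the necessary entropy bound is in place; the delicate step is the transfer from bracketing to an ordinary $L_2(\PP_n)$ packing/covering \emph{lower} bound used by Sudakov. The definition of non-Donsker supplies only $\log \cN_{[]}\asymp \eps^{-\alpha}$, which directly controls covering numbers from above ($\cN\leq \cN_{[]}$). For a \emph{lower} bound one either invokes the standard fact that in the polynomial regime bracketing and metric entropies have the same order, or argues directly that a strictly smaller metric entropy would put the class on the Donsker side of the dichotomy. Once the lower bound on metric entropy is secured, the transfer to empirical entropy via concentration of $\PP_n$ on a bounded convex class, the Sudakov calculation, and the fixed-to-random-design integration are all bookkeeping.
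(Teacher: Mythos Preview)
Your approach is essentially the paper's: apply Corollary~\ref{cor:fixed_design} conditionally on $X_1,\ldots,X_n$, integrate over the design, and combine with the entropy lower bound $\GaussAvgsF\gtrsim n^{-2/(2+\alpha)}$. The paper simply cites that lower bound (and the high-probability relation $\EmpGaussAvgs(\cF)\sim\GaussAvgsF$) from \cite[Chap.~5]{van2000empirical} rather than deriving it via Sudakov, and uses the high-probability event in place of your Jensen/$\inf$-exchange step; both integrations are valid and yours is arguably cleaner.

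One caution on the ``main obstacle'' you flag: your second proposed resolution---that a strictly smaller \emph{metric} entropy would put the class on the Donsker side---does not work here, because the paper defines (non-)Donsker through \emph{bracketing} entropy, and since $\cN\leq\cN_{[]}$ the $L_2(\PP)$ metric entropy can in principle be much smaller than $\eps^{-\alpha}$ without altering the bracketing exponent $\alpha$. So the only honest route is your first option, which is exactly what the paper does by citation: treat the matching lower bound on $L_2$ packing/covering numbers as part of the standing hypothesis on the class. With that in place, the empirical-to-population transfer (your ``standard uniform equivalence'') is indeed the isometry of Lemma~\ref{Lem:Biso}, and the rest is bookkeeping as you say.
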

The proof of these two corollaries appears in the appendix.
\begin{remark}
   Due to the geometry of general non-Donsker classes, in random design case the same lower bound may not hold. However, in all the examples in the  literature \citep{han2016multivariate,feng2018adaptation,kim2018adaptation,han2019isotonic,kur2020suboptimality} that study the adaptivity of ERM in non-Donsker families (such as convex functions when $d\geq 5$, isotonic functions when $d\geq 3$), the term of $t_{n,\PP}(f_0,\cF)$  of  Theorem   \ref{Thm:URandomDesign} is significantly larger than $\GaussAvgsF$. As a consequence, one may use Theorem   \ref{Thm:URandomDesign} to show that the bound in Eq. \eqref{Eq:1} is tight up to logarithmic factors.
\end{remark}

\subsection{General Lower Bound for Fixed Design}

In this section, we state the general lower bound for fixed design. In comparison to its consequence, Corollary~\ref{cor:fixed_design}, the version below captures complexity of local neighborhoods around regression functions that are close to $f_0$. Note that this lemma holds for any convex family (and not necessarily bounded). 
\begin{lemma}\label{Thm:fixed}
    Let $\cF$ be a convex family of functions and and let $x_1,\ldots,x_n \in \mathcal{X}$ be some $n$ points, and let $\PP_n:= n^{-1}\sum_{i=1}^{n}\delta_{x_i}$. For all $f_0 \in \cF$ define
    \begin{align}
        \label{eq:def_r0}
       r(f_0) := \argmax_{r \geq 0}\EmpGaussAvgs(B_n(f_0,r)) - \frac{r^2}{2}
    \end{align}
    and
    \[
        L_{x}(f_0):= \max_{g \in B_n(f_0,1) ,t \geq 0} \ \  \frac{\EmpGaussAvgs(B_n(g,t)) -\EmpGaussAvgs(B_n(f_0,r(f_0))-Cn^{-1}}{\|g - f_0 \|_{\PP_n}+t}
    \]
    where $C \in (1,\infty)$ is some absolute constant.
     Then the following lower bound holds:
    \[
\E_{\xi} \int (\hat{f}_n - f_0)^2 d\PP_n \geq \max\{\frac{(\EmpGaussAvgs(B_n(f_0,1))-Cn^{-1})^2}{4},L_{x}(f_0)^2\}. 
    \]
\end{lemma}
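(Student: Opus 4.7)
The plan is to exploit the fact that, since $\cF$ is convex, the function $\hat{h}_n := \hat{f}_n - f_0$ is the $L_2(\PP_n)$-projection of the noise vector $\xi = (\xi_1, \ldots, \xi_n)$ onto the convex set $\cF_0 := \cF - f_0$, which contains $0$. Writing $\rho := \|\hat{h}_n\|_{\PP_n}$ and $\langle u, v\rangle_n := n^{-1}\sum_i u(x_i)v(x_i)$, the first-order condition for projection onto a convex set yields $\langle \xi - \hat{h}_n, h - \hat{h}_n\rangle_n \leq 0$ for all $h \in \cF_0$, so in particular $\langle \xi, \hat{h}_n\rangle_n \geq \rho^2$. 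The engine of both bounds is to plug carefully chosen competitors $h \in \cF_0$ into this variational inequality and then pass from the random quantities to their expectations via Gaussian concentration.

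For the first bound, let $h^*$ attain $S_\xi(1) := \sup_{h \in \cF_0,\; \|h\|_{\PP_n} \leq 1} \langle \xi, h\rangle_n$. By convexity, $\cF_0$ is star-shaped around $0$, so $\lambda h^* \in \cF_0$ for every $\lambda \in [0,1]$. Combining the ERM inequality $\|\hat{h}_n - \xi\|_{\PP_n}^2 \leq \|\lambda h^* - \xi\|_{\PP_n}^2$ with $\langle \xi, \hat{h}_n\rangle_n \geq \rho^2$ yields $\rho^2 \geq 2\lambda S_\xi(1) - \lambda^2 \|h^*\|_{\PP_n}^2$, which when optimized in $\lambda \in [0,1]$ (using $\|h^*\|_{\PP_n} \leq 1$) gives $\rho^2 \gtrsim S_\xi(1)^2$. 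Taking expectation and invoking Borell--TIS concentration of $S_\xi(1)$ around its mean $\EmpGaussAvgs(B_n(f_0, 1))$ (with fluctuation $\lesssim n^{-1/2}$), together with Jensen's inequality, produces the first bound, with the $1/4$ factor and the $Cn^{-1}$ slack absorbing concentration errors.

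For the second bound, repeat the argument with a competitor $h^*$ drawn from $B_n(g^*, t^*) - f_0$, where $(g^*, t^*)$ attains $L := L_x(f_0)$; set $r^* := \|g^* - f_0\|_{\PP_n} + t^*$, so $\|h^*\|_{\PP_n} \leq r^*$ and $\langle \xi, h^*\rangle_n = \sup_{f \in B_n(g^*, t^*)} \langle \xi, f - f_0\rangle_n$. Plugging $h = h^*$ into the first-order inequality and bounding $\langle \hat{h}_n, h^* - \hat{h}_n\rangle_n \leq \rho \cdot \|h^* - \hat{h}_n\|_{\PP_n}$ yields
\[
\rho \;\geq\; \frac{\langle \xi, h^*\rangle_n - \langle \xi, \hat{h}_n\rangle_n}{\|h^* - \hat{h}_n\|_{\PP_n}}.
\]
In expectation, $\E\langle \xi, h^*\rangle_n = \EmpGaussAvgs(B_n(g^*, t^*))$ and, by a Chatterjee-style variational characterization (justified via concentration), $\E\langle \xi, \hat{h}_n\rangle_n \leq \EmpGaussAvgs(B_n(f_0, r(f_0))) + Cn^{-1}$, while the denominator is $\leq r^* + \rho$. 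The defining identity $L r^* = \EmpGaussAvgs(B_n(g^*, t^*)) - \EmpGaussAvgs(B_n(f_0, r(f_0))) - Cn^{-1}$ gives $\E\rho \geq L$ in the regime $\rho \lesssim r^*$, while the complementary regime $\rho \gtrsim r^*$ is dispatched by applying the first bound to $B_n(f_0, r^*)$. Jensen's inequality then delivers $\E\rho^2 \geq L^2$.

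The hard part will be the second bound. The identification $\E\langle \xi, \hat{h}_n\rangle_n \approx \EmpGaussAvgs(B_n(f_0, r(f_0)))$ requires that the ERM is close to the variational maximizer over $B_n(f_0, r(f_0))$, which in turn rests on uniform-in-$r$ Gaussian concentration of $r \mapsto S_\xi(r) := \sup_{h \in \cF_0,\; \|h\|_{\PP_n} \leq r}\langle \xi, h\rangle_n$ around its mean (standard discretization/peeling, fluctuations $\lesssim r/\sqrt{n}$ absorbed into the single $Cn^{-1}$ slack). The triangle bound $\|h^* - \hat{h}_n\|_{\PP_n} \leq r^* + \rho$ is loose near $\rho \approx r^*$, which is why a case-split is needed to stitch the two regimes together; the $-Cn^{-1}$ in the numerator of $L_x(f_0)$ is calibrated precisely to swallow all the concentration slack incurred along the way.
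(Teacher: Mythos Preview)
Your overall strategy---first-order optimality of the projection plus Chatterjee's concentration for $\langle\xi,\hat h_n\rangle_n$ and $\rho$---is exactly what the paper uses. However, both halves of your write-up contain a genuine gap.

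\textbf{First bound.} The displayed implication is wrong. Expanding the ERM inequality $\|\hat h_n-\xi\|_n^2\le\|\lambda h^*-\xi\|_n^2$ gives
\[
2\langle\xi,\hat h_n\rangle_n-\rho^2\ \ge\ 2\lambda S_\xi(1)-\lambda^2\|h^*\|_n^2 .
\]
Your other ingredient, $\langle\xi,\hat h_n\rangle_n\ge\rho^2$, is a \emph{second lower bound} on the same left-hand side; the two do not combine to yield $\rho^2\ge 2\lambda S_\xi(1)-\lambda^2\|h^*\|_n^2$. (That conclusion would require $\langle\xi,\hat h_n\rangle_n\le\rho^2$, the opposite inequality.) Concretely, take $\cF_0=\{\lambda h^*:\lambda\in[0,1]\}$ with $\|h^*\|_n=1$: on the event $S_\xi(1)>1$ one has $\rho=1$ while $\sup_\lambda(2\lambda S_\xi(1)-\lambda^2)=2S_\xi(1)-1>1$. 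The paper proceeds instead by a case split on whether $\EmpGaussAvgs(B_n(f_0,\hat r))$ exceeds $\tfrac12\EmpGaussAvgs(B_n(f_0,1))+\hat r^2/2$: if so, Cauchy--Schwarz together with Chatterjee's estimate $\E\langle\xi,\hat h_n\rangle_n\approx\EmpGaussAvgs(B_n(f_0,\hat r))$ gives the bound; if not, one specializes the inequality derived for $L_x(f_0)$ to $g=f_0$, $t=1$.

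\textbf{Second bound.} The step $\langle\hat h_n,h^*-\hat h_n\rangle_n\le\rho\,\|h^*-\hat h_n\|_n\le\rho(r^*+\rho)$ throws away exactly the term that makes the argument close. Write instead $\langle\hat h_n,h^*-\hat h_n\rangle_n=\langle\hat h_n,h^*\rangle_n-\rho^2$ and bound only $\langle\hat h_n,h^*\rangle_n\le\rho\,r^*$. The first-order condition then reads
\[
\langle\xi,h^*\rangle_n-\langle\xi,\hat h_n\rangle_n+\rho^2\ \le\ \rho\,r^*,
\]
so in expectation $r^*\E\rho\ge\EmpGaussAvgs(B_n(g^*,t^*))-\E\langle\xi,\hat h_n\rangle_n+\E\rho^2$. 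The retained $+\E\rho^2$ is essential: Chatterjee gives $\E\langle\xi,\hat h_n\rangle_n\le\EmpGaussAvgs(B_n(f_0,\hat r))+C\max\{\hat r^{3/2}n^{-1/4},n^{-1}\}$ and $\E\rho^2\ge\hat r^2-C\max\{\hat r^{3/2}n^{-1/4},n^{-1}\}$, and it is the inequality $\hat r^2-2C\hat r^{3/2}n^{-1/4}\ge\hat r^2/2-C'n^{-1}$ that reduces the slack to the stated $Cn^{-1}$. Dropping the leftover $\hat r^2/2\ge0$ then yields $\E\rho\ge L$ with no case split. Your decomposition produces only $r^*\E\rho+\E\rho^2\ge L r^*$, which does not imply $\E\rho^2\ge L^2$; and the case split you sketch mixes a pointwise event ($\rho\lesssim r^*$) with the deterministic quantity $\E\rho$, so it does not repair the loss.
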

Note that Corollary \ref{cor:fixed_design} follows almost immediately from the last lemma. To see this, the convexity of $\cF$, and the uniform bounded by $1$ assumption imply that $$ \cW(\cF)=\EmpGaussAvgs(B_n(f_0,2))\leq  2\EmpGaussAvgs(B_n(f_0,1)).$$
\begin{remark}
   The second term in our lower bound may be significantly larger than $\EmpGaussAvgs(\cF)^2$. For example, the second term may be equal to $\EmpGaussAvgs(\cF)$ in several non-Donsker families that appear in \citep{birge1993rates,kur2020convex,birge2006model}. We also remark that  constant $\frac{1}{4}$ is tight (up to $o_n(1)$).
\end{remark}

The rest of this paper is devoted to proofs. While the fixed design lower bound follows a rather simple argument, the corresponding lower bound in the random design case is more subtle. In particular, we employ a particular version of Talagrand's inequality that, in our particular regime, provides control on certain empirical processes, while the more commonly used versions (including Bousquet's inequality) result in vacuous estimates.

\section{Proof of Lemma \ref{Thm:fixed}}
\paragraph{Notation}
Throughout this section, $c,c_1,c_2 \in (0,1)$ and $C,C_1,C_2 \in (1,\infty)$ are some absolute constants that may change from to line to line. Also $S_1,s_1,S_2,s_2$ are absolute constants, but we use this notation to emphasize that we have some freedom to control their size. We also use the notation $C(c_1,C_2)$ to mean that the constant depends on $c_1,C_2$.

To recap, we assume that  $\cF$ is a convex family of functions, $Y_i = f_0(x_i) + \xi_i$, where $\xi_i \sim N(0,1)$ i.i.d., $x_1,\ldots,x_n \in \cX$, and $f_0 \in \cF$. We write $\|f\|_n = \|f\|_{\PP_n}$ and $\inner{f,g}_n = \int fg d\PP_n$. With slight abuse of notation, we write $\inner{\bxi,f}_n = \frac{1}{n}\sum_{i=1}^n \xi_i f(x_i)$ for $\vec{\xi}:=(\xi_1,\ldots,\xi_n)$. We also abbreviate $B_n(f_0,t):=B_{\PP_n}(f_0,t)$ to be the $L_2(\PP_n)$ ball with respect to empirical measure $\PP_n$.

Recall the definition of $r(f_0)$ in \eqref{eq:def_r0}. The following lemma that was proven in \citep{chatterjee2014new}:
\begin{lemma}\label{Lem:Chat}[\citep[Thm 1.1]{chatterjee2014new}]
The following holds under the above assumptions: 
\begin{equation}
    \Pr\left(|\|\hat{f}_n - f_0\|_{n} - r(f_0)| \geq t\right) \leq
    \begin{cases}
        3\exp(-\frac{nt^2}{64}) &  t \geq r(f_0)\\
        3\exp(-\frac{nt^{4}}{64r(f_0)^2}) &0 \leq  t \leq r(f_0)
    \end{cases}
\end{equation}
Moreover, for each $t\geq 0 $ the following holds 
\begin{equation}\label{Eq:Widthconcentration}
    \Pr\left(|\inner{\hat{f}_n-f_0, \bxi}_n - \EmpGaussAvgs(B_n(f_0,r(f_0)))| \geq t\cdot r(f_0) \right)\leq
    \begin{cases}
        3\exp(-\frac{nt^2}{64}) & t \geq r(f_0)\\
        3\exp(-\frac{nt^{4}}{64r(f_0)^2}) &0 \leq  t \leq r(f_0)
    \end{cases}
\end{equation}
\end{lemma}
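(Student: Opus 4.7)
The plan is to prove Lemma~\ref{Lem:Chat} via Chatterjee's argument, combining strong concavity induced by the $-t^2/2$ penalty with Gaussian concentration of the sup process. First, reformulate: since $Y_i = f_0(x_i) + \xi_i$, the ERM $\hat f_n$ equivalently maximizes $T(f) := \langle \bxi, f - f_0\rangle_n - \tfrac{1}{2}\|f - f_0\|_n^2$ over $f \in \cF$. Define
\[
G(t) := \sup_{f \in B_n(f_0, t)} \langle \bxi, f - f_0\rangle_n, \qquad \bar G(t) := \E G(t) = \EmpGaussAvgs(B_n(f_0, t)),
\]
and write $\hat t := \|\hat f_n - f_0\|_n$. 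Then $\hat t$ maximizes $G(t) - t^2/2$ while $r := r(f_0)$ maximizes $\bar G(t) - t^2/2$ by definition. Convexity of $\cF$ makes each $B_n(f_0, t)$ convex, and a one-line triangle-inequality argument ($\lambda f_1 + (1-\lambda) f_2 \in B_n(f_0, \lambda t_1 + (1-\lambda) t_2)$ with linearity of $\langle \bxi, \cdot\rangle_n$) shows $G$, hence $\bar G$, is concave on $\R_+$; consequently $G(t) - t^2/2$ and $\bar G(t) - t^2/2$ are $1$-strongly concave with unique maximizers $\hat t$ and $r$.

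Next, for each fixed $t \geq 0$ the map $\bxi \mapsto G(t)$ is $(t/\sqrt n)$-Lipschitz on $\R^n$, since every $\bxi \mapsto \langle \bxi, f - f_0\rangle_n$ with $f \in B_n(f_0, t)$ has Lipschitz constant $\|f-f_0\|_n/\sqrt n \leq t/\sqrt n$. Borell--TIS then yields $\Pr(|G(t) - \bar G(t)| \geq s) \leq 2\exp(-n s^2/(2 t^2))$. Combining $1$-strong concavity of $\bar G(\cdot) - (\cdot)^2/2$ around $r$ with optimality of $\hat t$ for the random analogue gives, after rearranging,
\[
\tfrac{1}{2}(\hat t - r)^2 \leq (G - \bar G)(\hat t) - (G - \bar G)(r).
\]
A peeling argument over dyadic values of $|\hat t - r|$ using the $t$-dependent Lipschitz bound then produces the two-regime tail: for $s \leq r$, the sub-Gaussian deviation at scale $r$ fights the quadratic gain $s^2$ to give $\exp(-c n s^4 / r^2)$; for $s > r$, the relevant scale becomes $s$ itself, yielding $\exp(-c n s^2)$. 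Tracking constants carefully through the union bound produces the stated $64$ and the prefactor $3$.

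For \eqref{Eq:Widthconcentration}, note $\langle \hat f_n - f_0, \bxi\rangle_n = G(\hat t)$, and decompose
\[
G(\hat t) - \bar G(r) = [G(\hat t) - G(r)] + [G(r) - \bar G(r)].
\]
The second term is $O(r/\sqrt n)$ by Borell--TIS at scale $r$. For the first, optimality of $\hat t$ for $G(\cdot) - (\cdot)^2/2$ means the supergradient of $G$ at $\hat t$ contains $\hat t$, so concavity of $G$ yields $|G(\hat t) - G(r)| \lesssim \hat t \cdot |\hat t - r|$. Plugging in the two-regime tail bound for $|\hat t - r|$ from the first part then converts control of the radius into control of the inner product, producing the same two exponents with the scaling $r \cdot t$ on the deviation that appears in \eqref{Eq:Widthconcentration}.

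The main obstacle is that Borell--TIS controls $(G - \bar G)(t)$ only at a single deterministic $t$, whereas $\hat t$ is random, so one must upgrade to a uniform bound over the range of possible radii. The dyadic peeling/chaining used to accomplish this, together with the $t$-scaling of the Lipschitz constant, is precisely what dictates the two-regime form of the bound (quartic exponent below the typical scale $r$, quadratic above it) and what produces the slightly worse constants such as $1/64$. A secondary subtlety is uniqueness of the maximizer in the definition of $r(f_0)$, which is guaranteed by $1$-strong concavity of $\bar G(\cdot) - (\cdot)^2/2$ once $\cF$ is convex.
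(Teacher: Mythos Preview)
The paper does not prove this lemma: it is stated with a citation to \cite{chatterjee2014new} and used as a black box. Your sketch is essentially Chatterjee's argument, and the first display is handled correctly. The reformulation $\hat f_n = \argmax_f \langle\bxi,f-f_0\rangle_n - \tfrac12\|f-f_0\|_n^2$, the concavity of $G$ and $\bar G$, the strong-concavity inequality $\tfrac12(\hat t-r)^2\le (G-\bar G)(\hat t)-(G-\bar G)(r)$, and the reduction to Borell--TIS at the two \emph{deterministic} radii $r$ and $r\pm s$ (which removes the need for a genuine peeling) are all right. One clarification: no dyadic peeling is needed for the first display, since on $\{\hat t>r+s\}$ the concave function $G(t)-t^2/2$ is increasing on $[r,r+s]$, giving $G(r+s)-\tfrac12(r+s)^2\ge G(r)-\tfrac12 r^2$; combined with $\bar G(r+s)-\tfrac12(r+s)^2\le \bar G(r)-\tfrac12 r^2-\tfrac12 s^2$ this yields $(G-\bar G)(r+s)-(G-\bar G)(r)\ge s^2/2$ at fixed points.

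There is, however, a real gap in your argument for \eqref{Eq:Widthconcentration}. The step ``concavity of $G$ yields $|G(\hat t)-G(r)|\lesssim \hat t\cdot|\hat t-r|$'' is only valid when $\hat t\le r$: the supergradient condition $\hat t\in\partial G(\hat t)$ gives $G(r)\le G(\hat t)+\hat t(r-\hat t)$, which for $\hat t<r$ is the desired upper bound $G(r)-G(\hat t)\le \hat t(r-\hat t)$, but for $\hat t>r$ it is a \emph{lower} bound $G(\hat t)-G(r)\ge \hat t(\hat t-r)$. In the latter case the relevant slope of $G$ at $r$ can exceed $\hat t$, so your inequality fails. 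A clean fix is to use the decomposition
\[
G(\hat t)-\bar G(r)=\bigl[M(\hat t)-\bar M(r)\bigr]+\tfrac12(\hat t^2-r^2),\qquad M(t):=G(t)-\tfrac12 t^2,~~\bar M(t):=\bar G(t)-\tfrac12 t^2,
\]
together with the two-sided sandwich $(G-\bar G)(r)\le M(\hat t)-\bar M(r)\le (G-\bar G)(\hat t)$, which follows from $M(\hat t)\ge M(r)$ and $\bar M(r)\ge \bar M(\hat t)$. Then $(G-\bar G)(r)$ is handled by Borell--TIS at the fixed point $r$, $(G-\bar G)(\hat t)$ by peeling over the already-established tail for $|\hat t-r|$, and $\tfrac12|\hat t^2-r^2|=\tfrac12|\hat t-r|\,(\hat t+r)$ directly by the first display. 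This recovers the stated two-regime bound with the $r\cdot t$ scaling.
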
 
Also, we state a simple corollary that follows from this lemma (cf. \citep{boucheron2013concentration},\citep[Thm 1.2]{chatterjee2014new})
\begin{corollary}\label{Cor:integrate}
The following two bounds hold 
\[
    \E \left|\inner{\hat{f}_n-f_0, \bxi}_n - \EmpGaussAvgs(B_n(f_0,r(f_0))) \right| \leq C_1\max\{r(f_0)^{3/2}n^{-1/4},n^{-1}\}.
\]
and 
\[
    \E \left|\|\hat{f}_n - f_0\|_{n}^2 - r(f_0)^2\right|  \leq C_2\max\{r(f_0)^{3/2}n^{-1/4},n^{-1}\}.
\]
\end{corollary}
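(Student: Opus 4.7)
The plan is to integrate the tail estimates from Lemma~\ref{Lem:Chat} via the layer-cake identity $\E|Z| = \int_0^\infty \Pr(|Z|\geq t)\,dt$, splitting the range of integration at the threshold that separates the sub-Gaussian and sub-quartic regimes, and then case-analyzing in $r(f_0)$. Both inequalities of the corollary will reduce to the same two-piece integral calculation, differing only in how the random quantity is massaged into a form to which \eqref{Eq:Widthconcentration} or the first bound of Lemma~\ref{Lem:Chat} directly applies.

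For the first bound, abbreviate $r = r(f_0)$ and set $Y := \inner{\hat f_n - f_0,\bxi}_n - \EmpGaussAvgs(B_n(f_0,r))$. Substituting $s = tr$ in \eqref{Eq:Widthconcentration} rewrites the tail directly for $|Y|$ as $3e^{-ns^2/(64r^2)}$ for $s \geq r^2$ and $3e^{-ns^4/(64r^6)}$ for $0 \leq s \leq r^2$. The rescaling $u = (n/(64r^6))^{1/4} s$ on the sub-quartic piece produces a prefactor $(64r^6/n)^{1/4}\int_0^\infty e^{-u^4}\,du$, giving an $O(r^{3/2}n^{-1/4})$ contribution; the sub-Gaussian piece is bounded by $3\int_0^\infty e^{-ns^2/(64r^2)}\,ds = O(rn^{-1/2})$. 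A case split on whether $r \geq n^{-1/2}$ shows that $rn^{-1/2} \leq \max\{r^{3/2}n^{-1/4},\,n^{-1}\}$, which completes this half.

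For the second bound, factor $\|\hat f_n - f_0\|_n^2 - r^2 = W(W + 2r)$ with $W := \|\hat f_n - f_0\|_n - r$, so that $|W(W+2r)| \leq W^2 + 2r|W|$. Running the same layer-cake argument on the tail for $|W|$ from Lemma~\ref{Lem:Chat} yields $\E|W| = O(r^{1/2}n^{-1/4} + n^{-1/2})$. Then $\E W^2 = \int_0^\infty 2t\,\Pr(|W|\geq t)\,dt$ is computed by substituting $v = t^2$ on the sub-quartic piece, which turns it into a Gaussian integral in $v$ and gives $\E W^2 = O(rn^{-1/2} + n^{-1})$. Summing, $\E W^2 + 2r\E|W| = O(r^{3/2}n^{-1/4} + rn^{-1/2} + n^{-1})$, and the same case split on $r$ reduces this to $O(\max\{r^{3/2}n^{-1/4}, n^{-1}\})$.

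No step is genuinely hard: the content of the corollary is a tidy accounting of Chatterjee's tails rather than any new probabilistic input. The only point requiring some care is the small-$r$ regime (specifically $r < n^{-1/2}$), where the linear middle term $r/\sqrt{n}$ must be absorbed into the $n^{-1}$ branch of the maximum rather than into $r^{3/2}n^{-1/4}$. Once that case distinction is made, both bounds fall out mechanically from the two-piece integration.
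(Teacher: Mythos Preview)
Your proposal is correct and follows the same route the paper implicitly takes: the paper offers no detailed proof but simply points to Lemma~\ref{Lem:Chat} and the standard references on tail integration (\cite{boucheron2013concentration}, \cite[Thm~1.2]{chatterjee2014new}), and your layer-cake computation with the sub-Gaussian/sub-quartic split is exactly the intended calculation. Your handling of both bounds, including the case split on $r$ versus $n^{-1/2}$ to absorb the $rn^{-1/2}$ cross term, is clean and matches what the cited references would yield.
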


\begin{proof}[Proof of Lemma \ref{Thm:fixed}]

For brevity, denote $\widehat{r}:= r(f_0)$, where $r(f_0)$ is defined in Lemma \ref{Lem:Chat}. 
Define
$$ 
    g_{\xi}:= \argmax_{h \in B_n(g,t)} \inner{h-g, \vec{\xi}}_n.
$$
Optimality of $\hat{f}_n$ and convexity of $\cF$ imply that $\inner{\nabla_f \norm{f-\by}^2_n|_{f=\hat{f}_n}, g-\hat{f}_n}_n \geq 0$ for any $g\in\cF$. In particular, for $g=g_\xi$ this implies
$$0\geq \E \inner{\bxi+f_0-\hat{f}_n, g_\xi - \hat{f}_n}_n,$$
where the expectation is over $\bxi$, conditionally on $x_1,\ldots,x_n$. For any $g\in \cF$, we may write the right-hand side as
\begin{align*}
    &\E \inner{\bxi+f_0-\hat{f}_n, g_\xi - g+ g - f_0 + f_0 - \hat{f}_n}_n\\
    & =  \EmpGaussAvgs(B_n(g,t)) -  
    \E\left[\inner{\bxi, \hat{f}_n-f_0}_n+ \inner{\hat{f}_n-f_0, g_{\xi} - g}_n+\inner{\hat{f}_n-f_0, g - f_0}_n\right] +\E \|\hat{f_n}-f_0\|_{n}^2
\end{align*}
where we used the definition of $g_{\xi}$ and the fact that $\E\inner{\bxi,g - f_0}_n=0$. Using  Corollary \ref{Cor:integrate}, we obtain a further lower bound of
\begin{align}
    \label{eq:expanded_lower_bd_inner}
    &\EmpGaussAvgs(B_n(g,t)) -   \EmpGaussAvgs(B_n(f_0,\widehat{r}))-\E\left[\inner{\hat{f}_n-f_0, g_{\xi} - g}_n+\inner{\hat{f}_n-f_0, g - f_0}_n\right] \notag\\& \quad + \widehat{r}^2 -C\widehat{r}^{3/2}n^{-1/4} - Cn^{-1} \notag \\
    & \geq \EmpGaussAvgs(B_n(g,t)) - \EmpGaussAvgs(B_n(f_0,\widehat{r}))
   -\E\left[\inner{\hat{f}_n-f_0, g_{\xi} - g}_n+\inner{\hat{f}_n-f_0, g - f_0}_n\right] \\
   &\quad + \widehat{r}^2/2 -C_1n^{-1}. \notag
\end{align}
To verify the last inequality, observe that $\widehat{r}^2/2 \geq C_1\widehat{r}^{3/2}n^{-1/4}$ when $\widehat{r} \geq C_2n^{-1/2}$ for $C_2$ that is large enough; on the other hand, if $\widehat{r} \leq C_2n^{-1/2}$, the $Cn^{-1}$ term is dominant for $C$ large enough. Since $\inner{\hat{f}_n-f_0, g - f_0}_n \leq \norm{\hat{f}_n-f_0}_n \norm{g-f_0}_n$ and $\inner{\hat{f}_n-f_0, g_{\xi} - g}_n\leq t\cdot \norm{\hat{f}_n-f_0}_n $,
we conclude that
\begin{equation}\label{Eq:newbasic}
\begin{aligned}
    0  \geq \EmpGaussAvgs(B_n(g,t)) - \EmpGaussAvgs(B_n(f_0,\widehat{r}))-\E[\|\hat{f}_n-f_0\|_{\PP_n}](\|g-f_0\|_{\PP_n}+t)-Cn^{-1}.
\end{aligned}
\end{equation}
By re-arranging the terms and using Jensen's inequality, we have
\[
    \E\| \hat{f_n} - f_0\|_{n}^2
    \geq  \left(\E\| \hat{f_n} - f_0\|_{n}\right)^2 \geq \left( \frac{\EmpGaussAvgs(B_n(g,t))- \EmpGaussAvgs(B_n(f_0,\widehat{r}))-Cn^{-1}}{t + \|f_0-g \|_{n}} \right)^2_+
\]
where $(a)_+^2 = \max\{a,0\}^2$. Since $L_x(f_0)$ in the statement of the Lemma is non-negative, the lower bound of $L_x(f_0)^2$ follows.

Now, for the first part of the lower bound, we have to consider two cases. The first one is when  $\EmpGaussAvgs(B_n(f_0,\widehat{r})) \geq 2^{-1}\EmpGaussAvgs(B_n(f_0,1))+\widehat{r}^2/2$, and we have
\begin{align*}
    \E\|\hat{f}_n - f_0\|_{n} &\geq \E \|\vec\xi\|_{n} \cdot \E \|\hat{f}_n - f_0\|_{n} \geq \E \inner{\vec{\xi}, \hat{f}_n - f_0}_n \\&\geq 2^{-1}\EmpGaussAvgs(B_n(f_0,1)) +\widehat{r}^2/2 -C\widehat{r}^{3/2}n^{-1/4} \geq 2^{-1}\EmpGaussAvgs(B_n(f_0,1)) -C_1n^{-1},
\end{align*}
where we used Cauchy-Schwartz inequality and Corollary \ref{Cor:integrate}.
In the other case, we use Eq. \eqref{Eq:newbasic} with 
$g=f_0$ and $t=1$:
\[
    \E \|\hat{f}_n - f_0 \|_{n} \geq \EmpGaussAvgs(B_n(f_0,1)) - \EmpGaussAvgs(B_n(f_0,\widehat{r})) -Cn^{-1}\geq 2^{-1}\EmpGaussAvgs(B_n(f_0,1))-Cn^{-1},
\]
concluding the proof.
\end{proof}

\section{Proof of Theorem \ref{Thm:URandomDesign}}
Throughout the proof of Theorem \ref{Thm:URandomDesign}, $\PP_n$ denotes the random empirical measure of $X_1,\ldots,X_n$. Denote by $\widehat{r} := \argmax \EmpGaussAvgs(B_n(f_0,r))-\frac{r^2}{2}$, with the hat emphasizing the dependence on $\Xn=(X_1,\ldots,X_n)$. We adopt the notation $\norm{\cdot}_n, \inner{\cdot,\cdot}_n$, $B_n$ in the previous section for the norm and the inner product with respect to $\PP_n$, and the $L_2(\PP_n)$ ball. Recall that we assumed that $\cF$ is not degenerate:  $\GaussAvgsF \geq c/\sqrt{n}$, for some $c \in (0,1)$.\footnote{See the proof of Lemma \ref{Lem:EventStand} for further details} 

\begin{proof}[Proof of Theorem \ref{Thm:URandomDesign}]
Denote 
\begin{equation}\label{Eq:tstar}
t_{*} := \min\{t_{n,\PP}(f_0,\cF), s_1\sqrt{\GaussAvgsF}, s_2\GaussAvgsF\}
\end{equation}
where $ s_1,s_2 \in (0,1)$ are small enough absolute constants that will be defined in the proof, and $ t_{n,\PP}(f_0,\cF)$ is defined in Eq. \eqref{Eq:2}.

Denote by $\cM$ the maximal separated set with respect to $L_2(\PP)$ at scale $6\sqrt{\GaussAvgsF}$, and let  \begin{equation}\label{Eq:cM}
M=\cM(6\sqrt{\GaussAvgsF},\cF,\PP)
\end{equation}
denote its size.  

For a constant $K_1 \in (1,\infty)$, let $\cE_1$ denote the high-probability event that is defined by the intersection of the events of Lemma \ref{Lem:Biso} and Lemma \ref{Lem:3}:
\begin{equation}
\begin{aligned}\label{Eq:E1}
\cE_1 := &\bigg\{\Xn: \sup_{f,g \in \cF}   \left|\|f-g\|_{n}^2 - \|f-g\|_{\PP}^2\right| \leq 10\GaussAvgs(\cF), 
\\& \sup_{h \in B_{\PP}(f_0,t_{*}),g \in \cM}\left| \inner{(g-f_0),(h-f_0)}_n-\E[(g-f_0)(h-f_0)]\right| \leq (8K_1)^{-1}\GaussAvgsF   \bigg\}.
\end{aligned}
\end{equation}
Further, define the events
\begin{equation}\label{Lem:Event}
\begin{aligned}
 \cE_2 &= \left\{\Xn: K_1^{-1}\GaussAvgsF \leq \EmpGaussAvgs(\cF) \leq K_1\GaussAvgsF  + Cn^{-1/2}\right\}, \\
 \cE_3 &= \left\{\Xn: \EmpGaussAvgs(B_{\PP}(f_0,t_{*})) \leq K_1\GaussAvgs(B_{\PP}(f_0,t_{*})) + C_1\GaussAvgsF^{1/2} n^{-1/2} \right\}, \\
    \cE &= \cE_1 \cap \cE_2 \cap \cE_3.
\end{aligned}
\end{equation}
Lemma \ref{Lem:EventStand}, proved in the appendix, shows that the event $\cE$ holds with probability of at least $0.9$. 
Note that under the event $\cE_1$, $\cM$ is also a $2\sqrt{\GaussAvgsF}$ separated set with respect to the random empirical measure $\PP_n$. Hence, we may apply Sudakov's minoration (Lemma \ref{Lem:SudMin}) with $\eps = 2\sqrt{\GaussAvgsF}$ and empirical measure $\PP_n$ defined on any $\Xn\in\cE$: \begin{align}
    \label{eq:packing_upper}
   c_1\sqrt{4\GaussAvgsF \cdot \frac{\log M}{n}} 
   \leq \EmpGaussAvgs(\cF)\leq K_1\GaussAvgsF + Cn^{-1/2}  \leq C_1K_1\GaussAvgsF,
\end{align}
where in the last inequality we used the assumption that $\GaussAvgsF \geq c\cdot n^{-1/2}$, and $C_1 \geq 0$ is defined to be large enough to satisfy the last inequality. Hence, the last equation implies that 
\begin{equation}\label{Eq:BoundOnM}
M \leq \exp(C_2 K_1^2 n\GaussAvgsF).
\end{equation}

First, recall the definition of $t_{n,\PP}(f_0,\cF)$
where in Lemma \ref{Lem:3} (that appears in the supplementary) we set $l_{\xi} = (256K_1)^{-3}$.
Recall Eq. \eqref{Eq:tstar}, where in Lemma \ref{Lem:3}  we set $s_1 = c(K,K_{1},C_2)$, and the three constants $K,K_1,C_2$ follow from Sudakov's minoration lemma, Talagrand's inequality, and Adamzcak's bound. We define $s_2:= 16^{-1}(K_1)^{-1}.$ 

Define the event 
\begin{align}
    \cA = \left\{(\bxi,\Xn): \hat{f}_n \in B_{\PP}(f_0,t_{*})\right\}
\end{align}
and, for any $\Xn$, define the conditional event
\begin{align}
    \cA(\Xn) = \left\{\bxi: \hat{f}_n \in B_{\PP}(f_0,t_{*})\right\}.
\end{align}
Assume by the way of contradiction that $\Pr_{x,\xi}(\cA) > 0.5$. Then, using the average principle (Fubini) and the fact $\Pr(\cE) \geq 0.9$, we can find an event $\cE_4 \subseteq \cE $  that has a probability of at least $0.4$ (when $n$ is large enough) such that 
\[
   \forall \Xn \in \cE_4  \quad \Pr_{\xi}(\cA(\Xn)) \geq 0.5.
\]
Our first step is to prove that, for all $\Xn \in \cE_4$,
\begin{equation}\label{Eq:abc}
    K_1\GaussAvgs(B_{\PP}(f_0,t_{*})) + l_{\xi}\GaussAvgsF \geq \EmpGaussAvgs(B_n(f_0,\widehat{r})).
\end{equation}
First, recall that $t_{*} \leq s_1\sqrt{\GaussAvgsF}$ and therefore under the event $\cE_1$, we have  
\begin{align}
    \label{eq:bd_on_emp_dist}\sup_{h \in B_{\PP}(f_0,t_{*})}\|h - f_0\|_{n}^2 \leq  11 \cdot \GaussAvgsF.
\end{align}
Now, for each $\Xn \in \cE_4 \subseteq \cE$, the map 
$\bxi\mapsto \sup_{h \in B_{\PP}(f_0,t_{*})} \inner{\vec{\xi},h-f_0}_n \ $
is Lipschitz with constant at most $$\sup_{h\in B_{\PP}(f_0,t_*)} n^{-1/2}\|h - f_0 \|_{n} \leq C_3\sqrt{\GaussAvgsF n^{-1}}$$
by \eqref{eq:bd_on_emp_dist},
and thus by Lipschitz concentration (Lemma \ref{Lem:Lipschtiz}), conditionally on $\Xn$,
\[
    \Pr_{\xi}\left(|\sup_{h \in B_{\PP}(f_0,t_{*})} \inner{\vec{\xi},h-f_0}_n - \EmpGaussAvgs(B_{\PP}(f_0,t_{*}))| \geq \eps \right) \leq 2\exp(-C n\GaussAvgsF^{-1}\eps^{2})
\]
for some absolute constant $C$. By setting $\eps = C_5(n^{-1}\GaussAvgsF)^{1/2}$ in the last equation, we may define the event
\[
   \cA_{1}(\Xn) =\left\{\bxi: |\sup_{h \in B_{\PP}(f_0,t_{*})}\inner{\vec{\xi},h -f_0}_n - \EmpGaussAvgs(B_{\PP}(f_0,t_{*}))| \leq C_5\sqrt{\GaussAvgsF n^{-1}} \right\} \cap \cA(\Xn).
\]
that holds with probability of at least $0.25$ (over $\xi$) for any $\Xn \in \cE_4$ .

Before defining the next event, observe that
\begin{align*}
\widehat{r} &= \argmax_{r \geq 0}\EmpGaussAvgs (B_n(f_0,r))-r^{2}/2 \leq   2\sqrt{\EmpGaussAvgs(\cF)},
\end{align*}
according to Lemma \ref{Lem:Chat} and the fact that for $r_n:=2\sqrt{\EmpGaussAvgs(\cF)}$ we have $\EmpGaussAvgs(B_n(f_0,r_n))-r_n^{2}/2 \leq 0$. As we already argued in \eqref{eq:packing_upper}, for any $\Xn \in \cE_2$ we have that $\EmpGaussAvgs(\cF) \leq  C_1K_1\GaussAvgsF$ for some absolute constant $C_1$, and thus 
\begin{align}
    \label{eq:bd_on_rhat}
    \forall \Xn\in\cE_2,~~~~ \widehat{r}\leq C\sqrt{K_1\GaussAvgs(\cF)}.
\end{align}
Now, from Eq. \eqref{Eq:Widthconcentration} in Lemma  \ref{Lem:Chat}, for $C_8$ large enough, the event 
\[
     \left\{\bxi: |\inner{\vec{\xi},\hat{f}_n -f_0}_n - \EmpGaussAvgs(B_n(f_0,\widehat{r}))| \leq C_8(n^{-1/4}\widehat{r}^{3/2} +n^{-1}) \right\} 
\]
holds with probability of at least $0.5$, and thus, in view of \eqref{eq:bd_on_rhat}, for all $\Xn \in \cE_4$, the event
\[
\cA_2(\Xn) =
     \left\{\bxi: |\inner{\vec{\xi},\hat{f}_n -f_0}_n - \EmpGaussAvgs (B_n(f_0,\widehat{r}))| \leq C_6K_1\GaussAvgs(\cF)^{3/4}n^{-1/4} \right\} \cap \cA_1(\Xn)
\]
that holds with probability of at least $0.1$ over $\xi$. 

We are now ready to prove Eq. \eqref{Eq:abc}, using the fact that $\cA_2(\Xn)$ is not empty for each $\Xn \in \cE_4$. To this end, fix $\Xn \in \cE_4$ and $\vec{\xi}\in\cA_2(\Xn)$. First, by definition of $\cE_3$, we have \begin{align*}
     K_1\GaussAvgs(B_{\PP}(f_0,t_{*})) \geq \EmpGaussAvgs(B_{\PP}(f_0,t_{*})) - C\sqrt{\GaussAvgsF n^{-1}}
\end{align*}
which can be further lower bounded, by definition of  $\cA_1(\Xn)$, by
\begin{align*}
     \sup_{h \in B_{\PP}(f_0,t_{*})} \inner{\bxi,h-f_0}_n - C\sqrt{\GaussAvgsF n^{-1}}. 
\end{align*}
Since $\vec\xi \in \cA_2(\Xn) \subseteq \cA(\Xn)$, the above expression is further lower bounded by
\begin{align*}
     &\inner{\bxi,\hat{f}_n-f_0}_n - C\sqrt{\GaussAvgsF n^{-1}}
\end{align*}
which, under the assumption of  $\vec\xi \in \cA_2(\Xn)$, is lower bounded by 
\begin{align*}
     &\EmpGaussAvgs(B_n(f_0,\widehat{r})) - C_2\sqrt{\GaussAvgsF n^{-1}}  - C_6K_1\GaussAvgs(\cF)^{3/4}n^{-1/4}.
\end{align*}
When $n$ is large enough, the above estimate is lower bounded by
\begin{equation*}
\begin{aligned}
     \EmpGaussAvgs(B_n(f_0,\widehat{r})) - l_{\xi}\GaussAvgsF.
\end{aligned}
\end{equation*}
To see this, observe that under the assumption of $\GaussAvgs(\cF) \geq c/\sqrt{n}$, both
$
    \sqrt{\GaussAvgs(\cF)n^{-1}}= o_n(\GaussAvgs(\cF) )
$
and 
$
    \GaussAvgs(\cF)^{3/4}n^{-1/4} = o_n(\GaussAvgs(\cF)).
$
Therefore, we proved Eq. \eqref{Eq:abc} holds, namely that
$$
    K_1\GaussAvgs(B_{\PP}(f_0,t_{*}))+l_{\xi}\GaussAvgsF \geq \EmpGaussAvgs(B_n(f_0,\widehat{r}))
$$
for all $\Xn \in \cE_4$.  Using the definition of $l_{\xi}= (256K_1)^{-3}$, we have
$$
    \GaussAvgs(B_{\PP}(f_0,t_{*})) \leq l_{\xi}\GaussAvgsF \leq 128^{-3}K_1^{-3}\GaussAvgsF,
$$ and thus for any $\Xn\in\cE_4$,
\begin{align}
\label{eq:compare_WF_and_What}
8^{-1}K_1^{-1}\GaussAvgsF \geq \EmpGaussAvgs(B_n(f_0,\widehat{r})).
\end{align}
By Lemma \ref{Lem:1} and \eqref{eq:compare_WF_and_What}, for any $\Xn \in \cE_4$, 
\begin{align*}
    0 \geq& (2^{-1}K_1^{-1}-8^{-1}K_{1}^{-1})\GaussAvgsF  -\E_{\xi}\max_{g \in \cM} \inner{g-f_0,\hat{f_n}-f_0}_n
  -16\sqrt{\GaussAvgsF}\widehat{r},
\end{align*}
and since $\Xn \in \cE_4 \subseteq \cE_1$, we also have 
\begin{align}
    \label{Eq:abcd}
    0 \geq& (2^{-1}K_1^{-1}-8^{-1}K_1^{-1}-8^{-1}K_1^{-1})\GaussAvgsF  -\sup_{h \in B_{\PP}(f_0,t_{*}),g \in \cM} \int(g-f_0)(h-f_0)d\PP
  -16\sqrt{\GaussAvgsF}\widehat{r} \notag
  \\& \geq (4K_{1})^{-1}\GaussAvgsF-\max_{g \in \cM}\|g-f_0\|t_{*}-16\sqrt{\GaussAvgsF}\widehat{r} \notag
  \\&\geq (4K_{1})^{-1}\GaussAvgsF-2t_{*}-16\sqrt{\GaussAvgsF}\widehat{r}
\end{align}
where we used the Cauchy-Schwartz inequality, the fact that $\cF \subset [-1,1]^{\cX}$, and the definition of $l_{\xi} = (256K_1)^{-3} $.

If $16\sqrt{\GaussAvgsF}\widehat{r} < (8K_1)^{-1}\GaussAvgsF$, then the last equation implies that
$$
    s_2\GaussAvgsF = (16K_1)^{-1}\GaussAvgsF < t_{*}.
$$ 
However, this inequality contradicts the definition of $t_{*}$, and thus cannot hold for any $\Xn \in \cE_4$. 
In the other case, we assume that $
   16\sqrt{\GaussAvgsF}\widehat{r}  \geq (8K_1)^{-1}\GaussAvgsF
$, or equivalently, $\widehat{r} \geq (128K_1)^{-1}\sqrt{\GaussAvgsF}$.
Now, from Lemma \ref{Lem:Chat} one can see that the maximizing value $\widehat{r}$ ensures
\[
    \EmpGaussAvgs(B_n(f_0,\widehat{r})) - 2^{-1}\widehat{r}^2 > 0 
\]
and hence
$$\EmpGaussAvgs(B_n(f_0,\widehat{r})) > 2^{-1}(128K_1)^{-2}\GaussAvgsF.$$
Therefore, under the event $\cE_4$ and by Eq. \eqref{Eq:abc}
\begin{align*}
 2K_{1}l_{\xi}\GaussAvgsF \geq K_1\GaussAvgs(B_{\PP}(f_0,t_{*})) + l_{\xi}\GaussAvgsF \geq \EmpGaussAvgs(B_n(f_0,\widehat{r})) > 2^{-1}(128K_1)^{-2}\GaussAvgsF.
\end{align*}
Once again, we have a contradiction for any $\Xn \in \cE_4$, since we assumed that $l_{\xi} =  (256K_1)^{-3}$. 

Therefore, we showed that Eq. \eqref{Eq:abcd}, cannot hold under the event $\cE_4$, i.e. the set $\cE_4$ is empty. This contradicts our earlier conclusion that $\Pr(\cE_4)\geq 0.4$, which was made under the assumption that event $\cA$ has probability at least $0.5$. Hence,  we conclude that $\Pr(\cA) \leq 0.5$, or, equivalently, with probability at least $0.5$, $\hat{f_n} \notin B_{\PP}(f_0,t_{*})$.  Therefore, we must have that 
\[
    \E\int(\hat{f}_n  -f_0)^2d\PP \geq \frac{t_{*}^2}{2} = \frac{1}{2} \min\{ t_{n,\PP}(f_0,\cF)^2,s_1^2\GaussAvgsF,s_2^2\GaussAvgsF^2\} \geq c_1 \cdot \min\{ t_{n,\PP}(f_0,\cF)^2,\GaussAvgsF^2\}. 
\]
where in the last inequality, we used the fact that $$\GaussAvgsF \leq \GaussAvgs([-1,1]^{\cX}) \leq   \E|\xi| \leq \sqrt{\E \xi^2} =1.$$ 
The theorem follows. 
\end{proof}

\section*{Acknowledgements} 

We acknowledge support from the NSF through award
DMS-2031883 and from the Simons Foundation through Award 814639
for the Collaboration on the
Theoretical Foundations of Deep Learning. We further acknowledge support from  NSF through grant DMS-1953181 and ONR through grants N00014-20-1-2336 and N00014-20-1-2394.

\bibliographystyle{alphabetic}
\bibliography{bib}

\begin{thebibliography}{31}
\providecommand{\natexlab}[1]{#1}
\providecommand{\url}[1]{\texttt{#1}}
\expandafter\ifx\csname urlstyle\endcsname\relax
  \providecommand{\doi}[1]{doi: #1}\else
  \providecommand{\doi}{doi: \begingroup \urlstyle{rm}\Url}\fi

\bibitem[Adamczak(2008)]{adamczak2008tail}
Radoslaw Adamczak.
\newblock A tail inequality for suprema of unbounded empirical processes with
  applications to markov chains.
\newblock \emph{Electronic Journal of Probability}, 13:\penalty0 1000--1034,
  2008.

\bibitem[Bartlett et~al.(2020)Bartlett, Long, Lugosi, and
  Tsigler]{bartlett2020benign}
Peter~L Bartlett, Philip~M Long, G{\'a}bor Lugosi, and Alexander Tsigler.
\newblock Benign overfitting in linear regression.
\newblock \emph{Proceedings of the National Academy of Sciences}, 117\penalty0
  (48):\penalty0 30063--30070, 2020.

\bibitem[Belkin et~al.(2018)Belkin, Hsu, and Mitra]{belkin2018overfitting}
Mikhail Belkin, Daniel Hsu, and Partha Mitra.
\newblock Overfitting or perfect fitting? risk bounds for classification and
  regression rules that interpolate.
\newblock \emph{arXiv preprint arXiv:1806.05161}, 2018.

\bibitem[Belkin et~al.(2019)Belkin, Rakhlin, and Tsybakov]{belkin2019does}
Mikhail Belkin, Alexander Rakhlin, and Alexandre~B Tsybakov.
\newblock Does data interpolation contradict statistical optimality?
\newblock In \emph{The 22nd International Conference on Artificial Intelligence
  and Statistics}, pages 1611--1619. PMLR, 2019.

\bibitem[Bellec(2017)]{bellec2017optimistic}
Pierre~C Bellec.
\newblock Optimistic lower bounds for convex regularized least-squares.
\newblock \emph{arXiv preprint arXiv:1703.01332}, 2017.

\bibitem[Birg{\'e}(2006)]{birge2006model}
Lucien Birg{\'e}.
\newblock Model selection via testing: an alternative to (penalized) maximum
  likelihood estimators.
\newblock In \emph{Annales de l'IHP Probabilit{\'e}s et statistiques},
  volume~42, pages 273--325, 2006.

\bibitem[Birg{\'e} and Massart(1993)]{birge1993rates}
Lucien Birg{\'e} and Pascal Massart.
\newblock Rates of convergence for minimum contrast estimators.
\newblock \emph{Probability Theory and Related Fields}, 97\penalty0
  (1-2):\penalty0 113--150, 1993.

\bibitem[Birg{\'e} et~al.(1998)Birg{\'e}, Massart, et~al.]{birge1998minimum}
Lucien Birg{\'e}, Pascal Massart, et~al.
\newblock Minimum contrast estimators on sieves: exponential bounds and rates
  of convergence.
\newblock \emph{Bernoulli}, 4\penalty0 (3):\penalty0 329--375, 1998.

\bibitem[Boucheron et~al.(2013)Boucheron, Lugosi, and
  Massart]{boucheron2013concentration}
St{\'e}phane Boucheron, G{\'a}bor Lugosi, and Pascal Massart.
\newblock \emph{Concentration inequalities: A nonasymptotic theory of
  independence}.
\newblock Oxford university press, 2013.

\bibitem[Bronshtein(1976)]{bronshtein1976varepsilon}
EM~Bronshtein.
\newblock $\varepsilon$-entropy of convex sets and functions.
\newblock \emph{Siberian Mathematical Journal}, 17\penalty0 (3):\penalty0
  393--398, 1976.

\bibitem[Chatterjee(2014)]{chatterjee2014new}
Sourav Chatterjee.
\newblock A new perspective on least squares under convex constraint.
\newblock \emph{The Annals of Statistics}, 42\penalty0 (6):\penalty0
  2340--2381, 2014.

\bibitem[Dudley(1999)]{dudley1999uniform}
Richard~M Dudley.
\newblock \emph{Uniform central limit theorems}.
\newblock Number~63. Cambridge university press, 1999.

\bibitem[Feng et~al.(2018)Feng, Guntuboyina, Kim, and
  Samworth]{feng2018adaptation}
Oliver~Y Feng, Adityanand Guntuboyina, Arlene~KH Kim, and Richard~J Samworth.
\newblock Adaptation in multivariate log-concave density estimation.
\newblock \emph{arXiv preprint arXiv:1812.11634}, 2018.

\bibitem[Ghosh et~al.(2019)Ghosh, Pananjady, Guntuboyina, and
  Ramchandran]{ghosh2019max}
Avishek Ghosh, Ashwin Pananjady, Adityanand Guntuboyina, and Kannan
  Ramchandran.
\newblock Max-affine regression: Provable, tractable, and near-optimal
  statistical estimation.
\newblock \emph{arXiv preprint arXiv:1906.09255}, 2019.

\bibitem[Gin{\'e} and Nickl(2016)]{gine2016mathematical}
Evarist Gin{\'e} and Richard Nickl.
\newblock \emph{Mathematical foundations of infinite-dimensional statistical
  models}.
\newblock Number~40. Cambridge University Press, 2016.

\bibitem[Han and Wellner(2016)]{han2016multivariate}
Qiyang Han and Jon~A Wellner.
\newblock Multivariate convex regression: global risk bounds and adaptation.
\newblock \emph{arXiv preprint arXiv:1601.06844}, 2016.

\bibitem[Han et~al.(2019)Han, Wang, Chatterjee, Samworth,
  et~al.]{han2019isotonic}
Qiyang Han, Tengyao Wang, Sabyasachi Chatterjee, Richard~J Samworth, et~al.
\newblock Isotonic regression in general dimensions.
\newblock \emph{The Annals of Statistics}, 47\penalty0 (5):\penalty0
  2440--2471, 2019.

\bibitem[Kim et~al.(2018)Kim, Guntuboyina, Samworth, et~al.]{kim2018adaptation}
Arlene~KH Kim, Adityanand Guntuboyina, Richard~J Samworth, et~al.
\newblock Adaptation in log-concave density estimation.
\newblock \emph{The Annals of Statistics}, 46\penalty0 (5):\penalty0
  2279--2306, 2018.

\bibitem[Koltchinskii(2011)]{koltchinskii2011oracle}
Vladimir Koltchinskii.
\newblock \emph{Oracle Inequalities in Empirical Risk Minimization and Sparse
  Recovery Problems: Ecole d’Et{\'e} de Probabilit{\'e}s de Saint-Flour
  XXXVIII-2008}, volume 2033.
\newblock Springer Science \& Business Media, 2011.

\bibitem[Kur et~al.(2020{\natexlab{a}})Kur, Gao, Guntuboyina, and
  Sen]{kur2020convex}
Gil Kur, Fuchang Gao, Adityanand Guntuboyina, and Bodhisattva Sen.
\newblock Convex regression in multidimensions: Suboptimality of least squares
  estimators.
\newblock \emph{arXiv preprint arXiv:2006.02044}, 2020{\natexlab{a}}.

\bibitem[Kur et~al.(2020{\natexlab{b}})Kur, Rakhlin, and
  Guntuboyina]{kur2020suboptimality}
Gil Kur, Alexander Rakhlin, and Adityanand Guntuboyina.
\newblock On suboptimality of least squares with application to estimation of
  convex bodies.
\newblock \emph{arXiv preprint arXiv:2006.04046}, 2020{\natexlab{b}}.

\bibitem[Liang et~al.(2020{\natexlab{a}})Liang, Rakhlin, and
  Zhai]{liang2020multiple}
Tengyuan Liang, Alexander Rakhlin, and Xiyu Zhai.
\newblock On the multiple descent of minimum-norm interpolants and restricted
  lower isometry of kernels.
\newblock In \emph{Conference on Learning Theory}, pages 2683--2711. PMLR,
  2020{\natexlab{a}}.

\bibitem[Liang et~al.(2020{\natexlab{b}})Liang, Rakhlin, et~al.]{liang2020just}
Tengyuan Liang, Alexander Rakhlin, et~al.
\newblock Just interpolate: Kernel “ridgeless” regression can generalize.
\newblock \emph{Annals of Statistics}, 48\penalty0 (3):\penalty0 1329--1347,
  2020{\natexlab{b}}.

\bibitem[Mendelson(2014)]{mendelson2014learning}
Shahar Mendelson.
\newblock Learning without concentration.
\newblock In \emph{Conference on Learning Theory}, pages 25--39, 2014.

\bibitem[Pisier(1983)]{pisier1983some}
Gilles Pisier.
\newblock Some applications of the metric entropy condition to harmonic
  analysis.
\newblock In \emph{Banach Spaces, Harmonic Analysis, and Probability Theory},
  pages 123--154. Springer, 1983.

\bibitem[Rakhlin et~al.(2017)Rakhlin, Sridharan, Tsybakov,
  et~al.]{rakhlin2017empirical}
Alexander Rakhlin, Karthik Sridharan, Alexandre~B Tsybakov, et~al.
\newblock Empirical entropy, minimax regret and minimax risk.
\newblock \emph{Bernoulli}, 23\penalty0 (2):\penalty0 789--824, 2017.

\bibitem[Samworth(2018)]{samworth2018recent}
Richard~J Samworth.
\newblock Recent progress in log-concave density estimation.
\newblock \emph{Statistical Science}, 33\penalty0 (4):\penalty0 493--509, 2018.

\bibitem[Tsigler and Bartlett(2020)]{tsigler2020benign}
Alexander Tsigler and Peter~L Bartlett.
\newblock Benign overfitting in ridge regression.
\newblock \emph{arXiv preprint arXiv:2009.14286}, 2020.

\bibitem[Tsybakov(2003)]{tysbakovnon}
Alexandre~B. Tsybakov.
\newblock \emph{Introduction to Nonparametric Estimation}.
\newblock Springer, 2003.

\bibitem[van~de Geer(2000)]{van2000empirical}
Sara~A van~de Geer.
\newblock \emph{Empirical Processes in M-estimation}, volume~6.
\newblock Cambridge university press, 2000.

\bibitem[Wyner et~al.(2017)Wyner, Olson, Bleich, and
  Mease]{wyner2017explaining}
Abraham~J Wyner, Matthew Olson, Justin Bleich, and David Mease.
\newblock Explaining the success of adaboost and random forests as
  interpolating classifiers.
\newblock \emph{The Journal of Machine Learning Research}, 18\penalty0
  (1):\penalty0 1558--1590, 2017.

\end{thebibliography}

\newpage
\appendix

\section{Lemmas}
\begin{lemma}\label{Lem:1}
Under the event $\cE$ in \eqref{Lem:Event}, and for $n$ that is large enough, the following holds:
\begin{equation}\label{Eq:stationaryRandom}
\begin{aligned}
  0 \geq& 2^{-1}K_1^{-1}\GaussAvgsF - \EmpGaussAvgs(B_{n}(f_0,\widehat{r})) - \E_{\xi}\max_{g \in \cM} \inner{g-f_0,\hat{f_n}-f_0}_n-16\sqrt{\GaussAvgsF}\widehat{r},
\end{aligned}
\end{equation}
where $K_1$ is defined in Eq. \eqref{Lem:Event}, and set $\cM$ is defined in Eq. \eqref{Eq:cM}.
\end{lemma}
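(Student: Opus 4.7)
The plan is to port the fixed-design stationarity bound \eqref{Eq:newbasic} into the random-design setting, and then use the bi-isometry built into $\cE_1$ to pass from the worst-case direction in $\cF$ to an element of the packing $\cM$. I condition on any $\Xn \in \cE$ throughout, so that $\cE_1$ and $\cE_2$ hold deterministically, and take all expectations over $\bxi$ alone.

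Fix $\Xn \in \cE$ and, for each $\bxi$, let $g_\xi := \argmax_{f \in \cF}\inner{\bxi, f-f_0}_n$, so that $\E_\xi \inner{\bxi, g_\xi - f_0}_n = \EmpGaussAvgs(\cF)$. Convexity of $\cF$ and the first-order optimality of $\hat{f}_n$ give $\inner{\bxi + f_0 - \hat{f}_n, g_\xi - \hat{f}_n}_n \leq 0$. Writing $g_\xi - \hat{f}_n = (g_\xi - f_0) - (\hat{f}_n - f_0)$, taking $\E_\xi$, invoking Corollary~\ref{Cor:integrate} to replace $\E_\xi\inner{\bxi, \hat{f}_n - f_0}_n$ by $\EmpGaussAvgs(B_n(f_0,\widehat{r}))$ up to an error of order $\max(\widehat{r}^{3/2}n^{-1/4}, n^{-1})$, and discarding the nonnegative $\E_\xi \|\hat{f}_n - f_0\|_n^2$ term, I obtain
\[
\E_\xi\inner{\hat{f}_n - f_0, g_\xi - f_0}_n \geq \EmpGaussAvgs(\cF) - \EmpGaussAvgs(B_n(f_0,\widehat{r})) - C\max(\widehat{r}^{3/2}n^{-1/4}, n^{-1}),
\]
which is the random-design analogue of \eqref{eq:expanded_lower_bd_inner}.

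The next step is to upgrade $g_\xi \in \cF$ to a point of $\cM$. Since $\cM$ is a maximal $6\sqrt{\GaussAvgsF}$-separated set in $L_2(\PP)$, it is in particular a $6\sqrt{\GaussAvgsF}$-cover of $\cF$; the bi-isometry on $\cE_1$ then produces $g_\xi^\star \in \cM$ with $\|g_\xi - g_\xi^\star\|_n \leq \sqrt{46\GaussAvgsF}$. Cauchy-Schwarz gives
\[
\inner{\hat{f}_n - f_0, g_\xi - f_0}_n \leq \max_{g\in\cM}\inner{\hat{f}_n - f_0, g - f_0}_n + \sqrt{46\GaussAvgsF}\,\|\hat{f}_n - f_0\|_n,
\]
and, by Jensen's inequality together with Corollary~\ref{Cor:integrate}, $\E_\xi \|\hat{f}_n - f_0\|_n \leq \widehat{r} + $ lower-order terms, so after taking $\E_\xi$ the Cauchy-Schwarz contribution is bounded by $16\sqrt{\GaussAvgsF}\,\widehat{r}$ for $n$ large enough.

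Combining the two displays and using $\EmpGaussAvgs(\cF) \geq K_1^{-1}\GaussAvgsF$ on $\cE_2$ yields
\[
\E_\xi\max_{g\in\cM}\inner{\hat{f}_n - f_0, g - f_0}_n + \EmpGaussAvgs(B_n(f_0,\widehat{r})) + 16\sqrt{\GaussAvgsF}\,\widehat{r} \geq K_1^{-1}\GaussAvgsF - C\max(\widehat{r}^{3/2}n^{-1/4}, n^{-1}).
\]
The assumption $\GaussAvgsF \geq c\, n^{-1/2}$ and the bound $\widehat{r} \leq C\sqrt{K_1\GaussAvgsF}$ from \eqref{eq:bd_on_rhat} together make $C\max(\widehat{r}^{3/2}n^{-1/4}, n^{-1}) = o(\GaussAvgsF)$, hence absorbable into $2^{-1}K_1^{-1}\GaussAvgsF$ for $n$ large. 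Rearranging gives the claim. The main obstacle is careful bookkeeping of constants: the Cauchy-Schwarz factor $\sqrt{46\GaussAvgsF}\,\widehat{r}$ plus the $\sqrt{\GaussAvgsF}$-scale slack arising from Corollary~\ref{Cor:integrate} must be shown to fit under $16\sqrt{\GaussAvgsF}\,\widehat{r}$, while the genuinely lower-order noise terms must fit under $2^{-1}K_1^{-1}\GaussAvgsF$.
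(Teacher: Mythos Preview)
Your proof is correct and follows essentially the same route as the paper's. The paper invokes the already-derived display \eqref{eq:expanded_lower_bd_inner} with $g=f_0$, $t=2$ (so that $B_n(f_0,2)=\cF$), whereas you rederive that stationarity inequality directly; and your covering constant $\sqrt{46}$ is in fact sharper than the paper's $16$ (the paper loosely passes from a maximal $6\sqrt{\GaussAvgsF}$-separated set to a $12\sqrt{\GaussAvgsF}$-net before transferring to $\PP_n$, while you correctly use that it is already a $6\sqrt{\GaussAvgsF}$-net). Both constants comfortably fit under $16\sqrt{\GaussAvgsF}\,\widehat{r}$, and the residual $\sqrt{\GaussAvgsF}\cdot C\widehat{r}^{1/2}n^{-1/4}\lesssim \GaussAvgsF^{3/4}n^{-1/4}=o(\GaussAvgsF)$ is, as you say, absorbed into $2^{-1}K_1^{-1}\GaussAvgsF$ rather than into the $16\sqrt{\GaussAvgsF}\,\widehat{r}$ term.
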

\begin{lemma}{\label{Lem:3}}
 Let $X_1,\ldots,X_n \underset{i.i.d}{\sim} \PP$, then the following holds with probability of at least $1-K\exp(-n\GaussAvgsF)$ 
\begin{align*}
 &\sup_{ g \in \cM, h \in B_{\PP}(f_0,t_{*})} |\inner{h-f_0,g-f_0}_n -\int_{\cX} (h-f_0)(g-f_0)d\PP| \leq (8K_1)^{-1}\GaussAvgsF.
\end{align*}
where $\cM$ is defined in Eq. \eqref{Eq:cM}, $t_{*}$ is defined in Eq. \eqref{Eq:tstar}, and $K_1,K$ are defined in Eq. \eqref{Lem:Event}, Lemma \ref{Lem:Talagrand}.
\end{lemma}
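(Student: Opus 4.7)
The plan is to apply Talagrand's concentration inequality (Lemma~\ref{Lem:Talagrand}) to the empirical process indexed by $h$ for each fixed $g \in \cM$, and then to take a union bound over $\cM$, whose cardinality is controlled by $M \leq \exp(CK_1^2 n\GaussAvgsF)$ thanks to \eqref{Eq:BoundOnM}. Concretely, I would define
\[
    Z_g := \sup_{h \in B_\PP(f_0,t_*)} \bigl|\inner{h-f_0,g-f_0}_n - \textstyle\int (h-f_0)(g-f_0)\,d\PP\bigr|
\]
and argue that $Z_g \leq (8K_1)^{-1}\GaussAvgsF$ for every $g \in \cM$ except on an event of probability at most $K\exp(-n\GaussAvgsF)$, which gives the Lemma.

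The three inputs to Talagrand for the class $\cG_g := \{(h-f_0)(g-f_0) : h \in B_\PP(f_0,t_*)\}$ are: the uniform bound $b = 4$ (each factor is bounded by $2$); the worst-case variance $\sigma^2 \leq \|g-f_0\|_\infty^2\|h-f_0\|_\PP^2 \leq 4t_*^2$; and the expected supremum $\E Z_g$. For the last of these, I would symmetrize from $\PP_n - \PP$ to a Rademacher process and then apply the Ledoux--Talagrand contraction principle, treating the fixed per-sample coefficients $(g-f_0)(X_i) \in [-2,2]$ as Lipschitz scalars, to obtain $\E Z_g \leq C\,\GaussAvgs(B_\PP(f_0,t_*))$. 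By the defining property of $t_* \leq t_{n,\PP}(f_0,\cF)$ in \eqref{Eq:2} together with the uniform-boundedness observation $\GaussAvgs(B_n(f_0,1)) \leq \tfrac12\GaussAvgsF$ noted right after \eqref{Eq:2}, this is at most $C l_\xi \GaussAvgsF$, which with the paper's choice $l_\xi = (256K_1)^{-3}$ is an arbitrarily small fraction of $(16K_1)^{-1}\GaussAvgsF$.

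Plugging these inputs into Talagrand's inequality with deviation $u = (16K_1)^{-1}\GaussAvgsF$, the tail takes the form $\exp\bigl(-c\,n\,\min\{u^2/(\sigma^2 + b\E Z_g),\ u/b\}\bigr)$. Substituting $t_*^2 \leq s_1^2\GaussAvgsF$ from \eqref{Eq:tstar} and the bound on $\E Z_g$, choosing $s_1 = c(K,K_1,C_2)$ sufficiently small (polynomially in $1/K_1$) makes the variance-driven exponent $n u^2/(\sigma^2 + b\E Z_g)$ exceed $(CK_1^2 + 1) n\GaussAvgsF$. A union bound over $\cM$ then converts the per-$g$ bound into the claimed probability $K\exp(-n\GaussAvgsF)$, with the extra factor of $2$ between $(16K_1)^{-1}$ and $(8K_1)^{-1}$ absorbing the centering term $C\,\E Z_g$ in Talagrand.

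The delicate step is the three-way calibration of $K_1$, $s_1$, and $l_\xi$. The Bernstein-type piece $u/b \sim \GaussAvgsF/K_1$ contributes only linearly in $1/K_1$, which forces $K_1$ to be a moderate absolute constant (controlled by Adamczak's bound that governs $\cE_2$), and requires that the variance $\sigma^2 \leq 4t_*^2$ and the mean $\E Z_g$ be made polynomially small in $1/K_1$ so that the subgaussian piece of the Talagrand exponent dominates the Sudakov packing exponent $CK_1^2 n\GaussAvgsF$ with room to spare. This is precisely the ``particular regime'' flagged at the end of Section~3: a naive Bousquet application would pay the full $\sqrt{v\eta}$ reparametrization and yield a vacuous bound, whereas exploiting $t_*^2 \ll \GaussAvgsF/K_1^{O(1)}$ allows the min-form version in Lemma~\ref{Lem:Talagrand} to close the gap.
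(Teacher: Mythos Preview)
Your proposal is correct and follows essentially the same route as the paper's proof: for each fixed $g_i\in\cM$ apply Talagrand's inequality to the class $\cG_i=\{(h-f_0)(g_i-f_0):h\in B_\PP(f_0,t_*)\}$ with $U=4$ and $V^2\le 4t_*^2\le 4s_1^2\GaussAvgsF$, bound $\E Z_{g_i}$ by symmetrization and contraction to $C\GaussAvgs(B_\PP(f_0,t_*))\le C l_\xi\GaussAvgsF$, set the deviation $u=(16K_1)^{-1}\GaussAvgsF$, choose $s_1=c(K,K_1,C_2)$ small enough so the tail beats the packing bound $M\le\exp(C_2K_1^2n\GaussAvgsF)$, and union-bound over $\cM$. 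The only cosmetic difference is that the paper uses the Bennett-type form of Talagrand in Lemma~\ref{Lem:Talagrand} (with the $\log(1+sU/V^2)$ factor) rather than the $\min\{u^2/\sigma^2,u/b\}$ form you invoke, but in the present regime these are equivalent.
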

\begin{lemma}\label{Lem:EventStand}
The event $\cE$ defined in Eq. \eqref{Lem:Event} holds with probability of at least $0.9$.
\end{lemma}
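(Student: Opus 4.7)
\textbf{Proof proposal for Lemma \ref{Lem:EventStand}.}

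The plan is to show that each of $\cE_1$, $\cE_2$, $\cE_3$ holds with probability at least, say, $0.97$, and then to union-bound. Lemma \ref{Lem:3} already takes care of the second half of $\cE_1$ (it holds with probability $1 - K e^{-n\GaussAvgsF} \geq 0.99$ once $n\GaussAvgsF$ is large enough, which is part of the ``large enough $n$'' regime since $\GaussAvgsF \gtrsim n^{-1/2}$). So the real work is to handle: (a) the uniform bi-isometry statement $\sup_{f,g\in\cF} |\|f-g\|_n^2 - \|f-g\|_{\PP}^2| \leq 10\GaussAvgsF$; (b) the two-sided bound on $\EmpGaussAvgs(\cF)$; and (c) the one-sided bound on $\EmpGaussAvgs(B_{\PP}(f_0,t_*))$.

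For (a), I would symmetrize the class $\cG = \{(f-g)^2 : f,g \in \cF\} \subseteq [0,4]^{\cX}$ and apply Talagrand's inequality. The functions in $\cG$ are $4$-Lipschitz in the difference, so by the Ledoux--Talagrand contraction principle the Gaussian complexity of $\cG$ is controlled by that of $\cF - \cF$, which in turn is $\lesssim \GaussAvgsF$ by convexity and uniform boundedness. Combining the expected deviation with Talagrand's concentration (with uniform envelope $4$) gives that the supremum is $O(\GaussAvgsF) + O(n^{-1/2})$ with high probability; since $\GaussAvgsF \gtrsim n^{-1/2}$, absorbing the lower-order term into the constant $10$ yields the claim.

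For (b) and (c), the key tool is concentration of a supremum-of-empirical-process functional around its mean. Write $\EmpGaussAvgs(\cG) = \E_\xi \sup_{g \in \cG} n^{-1}\sum \xi_i g(X_i)$. By bounded-differences (McDiarmid), replacing one $X_i$ changes $\EmpGaussAvgs(\cG)$ by at most $C/n$ when $\cG$ has envelope $1$ (resp.\ envelope $\lesssim t_* + O(\sqrt{\GaussAvgsF})$ for the local ball, using the bi-isometry on $\cE_1$), so $\EmpGaussAvgs(\cG)$ concentrates at scale $n^{-1/2}$ (resp.\ $\sqrt{\GaussAvgsF/n}$). The lower bound in $\cE_2$ and the upper bound in $\cE_3$ follow directly for any fixed $K_1 > 1$ once $n$ is large. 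For the upper bound in $\cE_2$, I would apply an Adamczak- or Talagrand-style concentration for the supremum of the empirical process (not just its $\xi$-conditional expectation), using that $\cF$ has envelope $1$ and weak variance bounded by some constant; this gives $\EmpGaussAvgs(\cF) \leq K_1 \GaussAvgsF + C n^{-1/2}$ with the desired probability.

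The main obstacle is getting the two-sided multiplicative control in $\cE_2$ with an explicit constant $K_1$: the lower-tail deviation is only additive in $n^{-1/2}$, which is why we need the non-degeneracy assumption $\GaussAvgsF \gtrsim n^{-1/2}$ to convert the additive gap into a multiplicative one. This also forces the ``$n$ large enough'' qualifier, and dictates the choice of the constant $s_2 = (16 K_1)^{-1}$ made in the main proof so that the various estimates remain compatible. Once these three pieces are in place, a union bound with slack $0.03 + 0.03 + 0.03 + 0.01 \leq 0.1$ gives $\Pr(\cE) \geq 0.9$.
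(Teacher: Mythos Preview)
Your overall outline is correct, and your treatment of $\cE_1$ and $\cE_2$ is close to what the paper does (the paper uses Adamczak's inequality for $\cE_2$ rather than McDiarmid, but both routes work once you handle the borderline case $\GaussAvgsF\asymp n^{-1/2}$ via the non-degeneracy footnote, as you note).

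The genuine gap is in your argument for $\cE_3$. You assert that McDiarmid on $X_1,\ldots,X_n\mapsto\EmpGaussAvgs(B_\PP(f_0,t_*))$ gives fluctuations at scale $\sqrt{\GaussAvgsF/n}$ because the local ball has ``envelope $\lesssim t_* + O(\sqrt{\GaussAvgsF})$, using the bi-isometry on $\cE_1$.'' This is wrong on two counts. First, the bi-isometry in $\cE_1$ only controls $\|h-f_0\|_{\PP_n}$, not $\|h-f_0\|_\infty$; the bounded-difference constant for McDiarmid in the $X_i$'s is driven by the sup-norm envelope, which here is simply $2$. Second, $\cE_1$ is itself a random event in $\Xn$, so you cannot invoke it to bound the McDiarmid oscillation, which must hold for every configuration. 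With the correct envelope, McDiarmid only yields an additive error $Cn^{-1/2}$, which is \emph{not} dominated by the $C_1\GaussAvgsF^{1/2}n^{-1/2}$ required in $\cE_3$---precisely in the Donsker regime $\GaussAvgsF\asymp n^{-1/2}$, where the downstream proof needs this error to be $o_n(\GaussAvgsF)$.

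The paper resolves this by working with the joint randomness $(X_i,\xi_i)$ and applying Adamczak's inequality (Lemma~\ref{Lem:Adam}) to the unbounded class $\{(x,\xi)\mapsto\xi(h(x)-f_0(x)):h\in B_\PP(f_0,t_*)\}$; the deviation term there involves the $L_2$-variance $V\leq t_*$ rather than the sup-norm, giving the right additive error $O(t_*/\sqrt n)$. A Fubini argument then extracts a high-probability event $\cE_8\subseteq\cE_1$ in $\Xn$; for each fixed $\Xn\in\cE_8$ one applies Gaussian Lipschitz concentration in $\bxi$, and \emph{now} using $\cE_1$ to bound the Lipschitz constant by $\sup_h\|h-f_0\|_{\PP_n}\leq\sqrt{11\GaussAvgsF}$ is legitimate, since that quantity is deterministic once $\Xn$ is fixed. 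This two-step device (Adamczak in the product space, then conditional Lipschitz concentration in $\bxi$) is the missing ingredient in your proposal.
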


\subsection{Auxiliary Lemmas}
\begin{lemma}\label{Lem:Biso}[\citep[pgs. 25-26]{koltchinskii2011oracle}]\label{Lem:Iso}
Let $\cF\subseteq [-1,1]^\cX$ be family of functions. Then with probability of at least $1-2\exp(-c_1n\GaussAvgsF)$,
\[
   \quad \forall f,g \in \cF  \quad  \left|\|f-g\|_{n}^2 - \|f-g\|_{\PP}^2\right| \leq 10\GaussAvgs(\cF),  
\]
and
\[
      \|\hat{f}_n-f_0\|_{n}^2  \leq 10\GaussAvgs(\cF).
\]
\end{lemma}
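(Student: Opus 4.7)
The lemma consists of two claims: a uniform upper-isometry statement relating the empirical and population $L_2$ norms over $\cF\times\cF$, and an in-sample error bound $\|\hat f_n - f_0\|_n^2 \leq 10\GaussAvgs(\cF)$. Both are classical consequences of the symmetrization--contraction--concentration pipeline for convex uniformly bounded classes, and the plan is to handle them in turn.

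For the uniform claim, I would work with the lifted class $\cG := \{(f-g)^2 : f,g\in\cF\}\subseteq [0,4]^\cX$, noting that $\sup_{f,g\in\cF}\bigl|\|f-g\|_n^2-\|f-g\|_\PP^2\bigr| = \sup_{h\in\cG}|(\PP_n-\PP)h|$. Standard symmetrization bounds the expected supremum by a constant multiple of the Rademacher complexity of $\cG$. Since $\phi(u)=u^2$ is $4$-Lipschitz on $[-2,2]$, the Ledoux--Talagrand contraction principle reduces this to a constant multiple of the Rademacher (and hence Gaussian) complexity of $\cF - \cF$, yielding a bound $O(\GaussAvgs(\cF))$ in expectation. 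To upgrade to high probability I would apply Talagrand's concentration inequality to the empirical process indexed by $\cG$, calibrating the deviation level at order $n\GaussAvgs(\cF)$ to obtain the stated exponent $c_1 n\GaussAvgs(\cF)$.

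For the second claim, I would invoke the ERM basic inequality. Since $\hat f_n$ is an empirical-risk minimizer and $f_0\in\cF$, expanding $\|\hat f_n - Y\|_n^2 \leq \|f_0 - Y\|_n^2$ with $Y_i = f_0(X_i)+\xi_i$ yields $\|\hat f_n - f_0\|_n^2 \leq 2\inner{\bxi,\hat f_n - f_0}_n \leq 2\sup_{f\in\cF}\inner{\bxi,f-f_0}_n$. The conditional mean of this supremum given $\Xn$ equals $2\EmpGaussAvgs(\cF)$, which is close to $2\GaussAvgs(\cF)$ by concentration in $\Xn$ (a McDiarmid-type argument, since each sample perturbs the empirical Gaussian average by only $O(n^{-1/2})$). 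A Gaussian Lipschitz step in $\bxi$ -- the supremum is $O(n^{-1/2})$-Lipschitz in $\bxi$ because $\cF\subseteq[-1,1]^\cX$ -- adds only a deviation of order $\sqrt{\GaussAvgs(\cF)/n}$, which is negligible under the non-degeneracy assumption $\GaussAvgs(\cF) \gtrsim n^{-1/2}$. Combining on the intersection of these high-probability events produces $\|\hat f_n - f_0\|_n^2 \leq 10\GaussAvgs(\cF)$.

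The main obstacle will be calibrating the Talagrand step so that the deviation probability has the correct exponent $c_1 n \GaussAvgs(\cF)$ rather than the weaker $c_1 n\GaussAvgs(\cF)^2$ that a naive application would produce. Functions in $\cG$ are bounded by $4$, but their variances can a priori be of order one, so one must leverage the finer bound $\Var((f-g)^2) \leq 4\|f-g\|_\PP^2$ via a peeling argument over $L_2(\PP)$-shells $\{\|f-g\|_\PP\sim 2^{-k}\}$, applying Talagrand within each shell and taking a union bound over the $O(\log n)$ relevant scales. This is precisely the route taken in Koltchinskii's monograph, which is why the lemma is quoted directly.
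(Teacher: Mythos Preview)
The paper gives no proof of this lemma; it simply cites pages 25--26 of Koltchinskii's monograph, so there is no in-paper argument to compare against. Your outline --- lift to the squared class, bound the mean by symmetrization and Ledoux--Talagrand contraction, apply Talagrand's inequality for concentration, and use the basic inequality for the second claim --- is the standard route and matches what the cited reference does in spirit.

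The gap is in your final paragraph. Peeling over the shells $\{\|f-g\|_\PP\sim 2^{-k}\}$ does shrink the variance proxy on \emph{fine} shells, but on the coarsest shell $\|f-g\|_\PP=\Theta(1)$ one still has $\Var\bigl((f-g)^2\bigr)=\Theta(1)$, and Talagrand's inequality at deviation level $s=c\,\GaussAvgs(\cF)$ then yields an exponent of order $ns^2=c^2 n\GaussAvgs(\cF)^2$, not $n\GaussAvgs(\cF)$. This is not a removable artifact: for a single fixed pair with $\|f-g\|_\PP$ of order one, the CLT gives $\|f-g\|_n^2-\|f-g\|_\PP^2$ a standard deviation of order $n^{-1/2}$, so in the Donsker regime $\GaussAvgs(\cF)\sim n^{-1/2}$ the failure probability $\Pr\bigl(|\|f-g\|_n^2-\|f-g\|_\PP^2|>10\GaussAvgs(\cF)\bigr)$ stabilizes at a positive constant as $n\to\infty$, contradicting the claimed $\exp(-c_1 n\GaussAvgs(\cF))=\exp(-c_1'\sqrt{n})$ tail. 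In short, with deviation $O(\GaussAvgs(\cF))$ the achievable exponent is $c_1 n\GaussAvgs(\cF)^2$, which your argument already delivers without any peeling; the stated exponent appears to be an imprecision in the paper's transcription of the cited result. This does not affect the paper's downstream use of the lemma, which only requires the event $\cE_1$ to hold with some fixed constant probability for large $n$.
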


\begin{lemma}[Sudakov's minoration lemma]\label{Lem:SudMin}
     Let $\cH \subset [-1,1]^\cX$. There exists a constant $c_1$ such that for any $\PP_n$,
    \[
    c_1\sup_{\eps \geq 0}\eps\sqrt{\frac{\log \cM(\eps,\cH,\PP_n)}{n}} \leq \EmpGaussAvgs(\cH).
    \]
    where $\cM(\eps,\cH,\PP_n)$ denotes the size of the largest $\epsilon$-separated set in $\cH$ with respect to $L_2(\PP_n)$.
\end{lemma}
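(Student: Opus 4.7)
The plan is to recognize $\EmpGaussAvgs(\cH)$ as the expected supremum of a centered Gaussian process indexed by $\cH$, identify the natural pseudo-metric it induces, and then invoke the classical Sudakov minoration inequality after a single rescaling step. Since this is a well-known tool rather than a novel result, the goal is to reduce the claim to the standard statement rather than reprove Sudakov from scratch.

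First, I would fix $x_1,\ldots,x_n$ and consider the Gaussian process $\{X_f\}_{f\in\cH}$ defined by
\[
X_f := \frac{1}{n}\sum_{i=1}^n \xi_i f(x_i),
\]
so that $\E\sup_{f\in\cH} X_f = \EmpGaussAvgs(\cH)$. A direct variance computation shows that the canonical (intrinsic) pseudo-metric of this process is
\[
d(f,g) \;=\; \sqrt{\E(X_f - X_g)^2} \;=\; \sqrt{\tfrac{1}{n^2}\textstyle\sum_i (f(x_i)-g(x_i))^2} \;=\; \tfrac{1}{\sqrt{n}}\,\|f-g\|_{\PP_n}.
\]
In particular, a set is $\eps$-separated in $L_2(\PP_n)$ if and only if it is $(\eps/\sqrt{n})$-separated in $d$, so
\[
\cM(\eps,\cH,\PP_n) \;=\; \cM(\eps/\sqrt{n},\,\cH,\,d).
\]

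Next, I would invoke the classical Sudakov minoration for Gaussian processes: there exists an absolute constant $c>0$ such that for any centered Gaussian process $(X_f)_{f\in\cH}$ with canonical pseudo-metric $d$,
\[
\E\sup_{f\in\cH} X_f \;\geq\; c\,\sup_{\delta>0}\,\delta\sqrt{\log \cM(\delta,\cH,d)}.
\]
Substituting $\delta = \eps/\sqrt{n}$ and using the identification of packing numbers above yields
\[
\EmpGaussAvgs(\cH)\;\geq\; c\,\sup_{\eps>0}\,\frac{\eps}{\sqrt{n}}\sqrt{\log \cM(\eps,\cH,\PP_n)},
\]
which is the claimed inequality with $c_1 := c$.

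There is no real obstacle at this level, since all the content is packaged inside the black-box Sudakov minoration bound. If one wished to prove that black-box step, the main work would be the classical argument via Slepian's lemma and Gaussian anti-concentration (comparing the supremum over an $\eps$-separated set to the supremum of an i.i.d.\ Gaussian vector of matching variances), but given that Sudakov's minoration is standard and is cited as such in the paper, the proof reduces to the rescaling computation above.
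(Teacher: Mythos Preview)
Your reduction is correct: identifying the canonical pseudo-metric of the process $X_f=\tfrac{1}{n}\sum_i \xi_i f(x_i)$ as $n^{-1/2}\|f-g\|_{\PP_n}$ and then invoking the classical Sudakov minoration is exactly how one obtains this formulation. The paper does not give a proof of this lemma at all---it is listed among the auxiliary lemmas (alongside Talagrand's and Adamczak's inequalities) as a standard tool and used as a black box---so your write-up in fact supplies more detail than the paper itself.
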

The next two lemmas appear in \citep[pgs. 24-25]{koltchinskii2011oracle}, \citep{adamczak2008tail}.
\begin{lemma}[Talagrand's inequality]\label{Lem:Talagrand}
Let $X_1,\ldots,X_n \underset{i.i.d.}{\sim} \PP$, and $\cH\subseteq [-U,U]^\cX$ be a family of functions. Let $Z=\sup_{f \in \cH}|n^{-1}\sum_{i=1}^{n}f(X_i)-\E[f]|$. Then there exists an absolute constant $K \geq 0$ such that for any $s \geq 0$
\begin{align*}
    \Pr \left( \left|Z- \E Z\right| \geq s \right) \leq K\exp\left(-K^{-1}U^{-1}\log(1+\frac{sU}{V^2})ns\right),
\end{align*}
where $ V^2 = \sup_{f \in \cH}\int f^2d\PP$.
\end{lemma}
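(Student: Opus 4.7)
The plan is to prove this inequality via the entropy method applied to the product measure $\PP^{\otimes n}$, which is the route used in both Koltchinskii's textbook and in Adamczak's paper cited here. First I would reduce the two-sided supremum $Z = \sup_{f \in \cH}|n^{-1}\sum_i f(X_i) - \E f|$ to a one-sided supremum $Z' = \sup_{f \in \cH'}(n^{-1}\sum_i f(X_i) - \E f)$, where $\cH' = \cH \cup (-\cH)$. This doubling preserves $U$ and $V^2$ and changes $\E Z'$ by at most a constant factor, so it suffices to establish concentration for $Z'$ and then take a union bound over the two signs.

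The main technical step is to control the logarithmic moment generating function $\psi(\lambda) = \log \E \exp(\lambda(Z' - \E Z'))$ for $\lambda > 0$. Applying the modified log-Sobolev inequality of Ledoux--Massart for product measures, together with the fact that $Z'$ is a supremum-type functional of $X_1,\ldots,X_n$ with increments bounded by $U/n$, yields a Herbst-type differential inequality of the form $\lambda \psi'(\lambda) - \psi(\lambda) \leq \sum_i \E[\phi(\lambda(Z' - Z'_i))]$ with $\phi(x) = e^x - x - 1$, where $Z'_i$ denotes the supremum after replacing $X_i$ by an independent copy. A self-bounding argument controls $\sum_i \E_i(Z' - Z'_i)^2$ by $(V^2 + c U\E Z')/n$, and integrating the differential inequality produces a Bennett-type estimate
\[
    \psi(\lambda) \leq \frac{n(V^2 + cU\E Z')}{U^2}\bigl(e^{\lambda U/n} - \lambda U/n - 1\bigr).
\]

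To convert this to a tail bound I would use the Chernoff inequality $\Pr(Z' - \E Z' \geq s) \leq \exp(-\sup_{\lambda > 0}(\lambda s - \psi(\lambda)))$. Optimizing in $\lambda$ gives $\lambda_* \approx (n/U)\log(1 + sU/V^2)$, and substituting reproduces the Bennett form $\exp(-c\, ns\, h(sU/V^2)/U)$ with $h(u) = (1+u)\log(1+u) - u$. Using the elementary bound $h(u) \geq \tfrac{1}{2} u\log(1+u)$, and absorbing the $\E Z'$ correction into the absolute constants (valid in the nontrivial regime where $s$ dominates $\E Z'$), one recovers the claimed tail bound $K\exp(-K^{-1}U^{-1}\log(1+sU/V^2)\, n s)$. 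The lower tail follows symmetrically from the same argument applied to $-Z'$.

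The main obstacle is the self-bounding step that produces the variance proxy $V^2$ rather than a strictly larger quantity: bounding $\sum_i \E_i(Z' - Z'_i)^2$ by $V^2/n$ up to a correction of order $U \E Z'/n$ requires comparing the random expression $\sup_f n^{-1}\sum_i f(X_i)^2$ with its deterministic counterpart $\sup_f \int f^2 d\PP = V^2$, which is where a symmetrization and contraction argument is required. Since the lemma is cited directly from Adamczak and Koltchinskii, in practice I would simply invoke their statements rather than re-derive this step from scratch, as its precise form with the logarithmic factor in the exponent is exactly Adamczak's contribution beyond the classical Talagrand--Ledoux--Massart inequality.
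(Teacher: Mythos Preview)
The paper does not prove this lemma at all: it is listed among the auxiliary lemmas with a citation to \cite{koltchinskii2011oracle} and \cite{adamczak2008tail}, and is invoked purely as a black box (in the proof of Lemma~\ref{Lem:3}). You already anticipate this at the end of your proposal, and indeed simply citing the result is exactly what the paper does.

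Your entropy-method sketch is a reasonable outline of how such inequalities are derived, but one step deserves a caveat. The self-bounding argument you describe produces a variance proxy of the form $V^2 + cU\,\E Z'$, not the bare $V^2$ that appears in the lemma as stated. You propose to ``absorb the $\E Z'$ correction into the absolute constants'' by restricting to the regime $s \gg \E Z'$, but the lemma claims the bound for \emph{all} $s \geq 0$, and in general $U\,\E Z'$ need not be dominated by $V^2$. The standard Bousquet/Massart formulations retain the $U\,\E Z$ term in the denominator of the logarithm, and removing it is not simply a matter of rescaling $K$. In the paper's sole application of this lemma (the union bound over $\cM$ in Lemma~\ref{Lem:3}), the parameters are chosen so that $\E\|\PP_n - \PP\|_{\cG_i}$ is already controlled by a small multiple of $\GaussAvgsF$, so the distinction does not affect the downstream argument; but as a standalone statement with bare $V^2$, the lemma is slightly stronger than what your sketch (or the usual entropy-method proof) directly delivers.
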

\begin{lemma}[Adamczak's inequality]\label{Lem:Adam}
Let $\cG$ be a centred family of functions supported on $\cD$, and $\QQ$ be some distribution on $\cD$. Let $Z=\sup_{g \in \cG}|n^{-1}\sum_{i=1}^{n}g(X_i)|$. Assume that there exists an envelope function $G$ such that $|g(x)| \leq G(x)$ for all $g \in \cG, x \in \cD$. Then, the following holds for all $t \geq 0$
\begin{align*}
    K_{2}^{-1}\E Z
    -V\sqrt{\frac{t}{n}} -  \frac{\|\max_{1\leq i \leq n}f'(X_i)\|_{\psi_1}t}{n} \leq Z
    \leq K_{2}\E Z +V\sqrt{\frac{t}{n}} + \frac{\|\max_{1\leq i \leq n}f(X_i)\|_{\psi_1}t}{n},
\end{align*}
where $ V^2 := \sup_{g \in \cG}\int g^2d\QQ$, and $K_2 \in (1,\infty)$ is some universal constant, and $\psi_1$ is the Orlicz norm.
\end{lemma}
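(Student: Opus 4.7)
The plan is to prove Adamczak's two-sided concentration inequality via the entropy method of Boucheron--Lugosi--Massart, suitably adapted to unbounded empirical processes. Since the class $\cG$ is not uniformly bounded, Talagrand's inequality (Lemma \ref{Lem:Talagrand}) does not apply directly, and the extra term $\|\max_i G(X_i)\|_{\psi_1}t/n$ in the conclusion is precisely what replaces the $Ut/n$ term in Talagrand. The upper and lower tails are handled by the same machinery with positive and negative values of the dual parameter $\lambda$, respectively; I describe the upper tail first.

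First, I would apply the modified log-Sobolev inequality for product measures to the random variable $Z = \sup_{g \in \cG} |n^{-1}\sum_i g(X_i)|$. Letting $Z^{(i)}$ denote $Z$ recomputed with $X_i$ replaced by an independent copy $X_i'$, this yields
\[
\mathrm{Ent}(e^{\lambda Z}) \;\leq\; \sum_{i=1}^n \mathbb{E}\bigl[\phi\!\bigl(-\lambda(Z-Z^{(i)})\bigr) e^{\lambda Z}\bigr], \qquad \phi(x)=e^x-x-1.
\]
Because $Z$ is a supremum, choosing a near-maximizer $g^\star$ gives the perturbation bound $|Z-Z^{(i)}| \leq n^{-1}(G(X_i)+G(X_i'))$. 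Splitting $\phi$ into its quadratic part (which produces the variance proxy $V^2$) and its super-quadratic tail (which couples to the envelope), I would control the latter by comparison with the maximum $\max_i G(X_i)$ and its $\psi_1$-Orlicz norm. Herbst's argument then converts the resulting entropy bound into a differential inequality for $F(\lambda) = \lambda^{-1}\log \mathbb{E}[e^{\lambda(Z-\mathbb{E}Z)}]$, whose integration yields a Bernstein-type MGF bound with scale parameter $V/\sqrt{n}$ and subexponential tail parameter $\|\max_i G(X_i)\|_{\psi_1}/n$. Markov's inequality applied at the optimal $\lambda$ gives the upper tail $Z \leq K_2 \mathbb{E}Z + V\sqrt{t/n} + \|\max_i G(X_i)\|_{\psi_1}t/n$ with probability at least $1-e^{-t}$.

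For the lower tail, I would rerun the same entropy argument with $\lambda < 0$, using $\phi(-\lambda u) \leq \lambda^2 u^2/2$ which is now valid globally, producing a sharper variance contribution but yielding only the one-sided inequality $Z \geq K_2^{-1}\mathbb{E}Z - V\sqrt{t/n} - \|\max_i G(X_i)\|_{\psi_1}t/n$. The factor $K_2^{-1}<1$ in front of $\mathbb{E}Z$ (rather than $1$) arises because, unlike in Talagrand's bounded setting, the lower tail of the expectation itself must absorb a contraction-loss stemming from reducing to an auxiliary bounded class via truncation at a level proportional to $\|\max_i G(X_i)\|_{\psi_1}$.

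The main obstacle, as anticipated, is avoiding a $\log n$ loss when estimating the tail contribution from the envelope. A naive union bound over the $n$ coordinates would replace $\|\max_i G(X_i)\|_{\psi_1}$ by $(\log n)\max_i\|G(X_i)\|_{\psi_1}$, which is strictly weaker. The point of the entropy method here is that the sum $\sum_i \mathbb{E}[\phi(-\lambda(Z-Z^{(i)}))e^{\lambda Z}]$ is naturally controlled by the single random variable $\max_i G(X_i)$ through the envelope bound and the Talagrand--Ledoux "sup of empirical process equals a single coordinate" heuristic, so the sharp $\psi_1$-norm of $\max_i G(X_i)$ appears with the right constant. Verifying this requires a careful truncation of $G$ at a threshold calibrated to $\|\max_i G(X_i)\|_{\psi_1}$ together with Talagrand's bounded inequality (Lemma \ref{Lem:Talagrand}) applied to the truncated class, and a separate moment estimate for the tail piece — this bookkeeping is the technical heart of the proof.
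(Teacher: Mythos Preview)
The paper does not prove this lemma at all: it is listed among the Auxiliary Lemmas with the attribution ``The next two lemmas appear in \citep[pgs. 24-25]{koltchinskii2011oracle}, \citep{adamczak2008tail},'' and is simply quoted from the literature without proof. So there is no ``paper's own proof'' to compare your attempt against; the authors invoke Adamczak's result as a black box.

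As a standalone sketch, your outline is in the right spirit but does not quite match how Adamczak's original argument actually proceeds. The key device in \citep{adamczak2008tail} is a direct truncation: one splits each $g$ as $g\mathbf{1}_{\{G\leq \rho\}} + g\mathbf{1}_{\{G>\rho\}}$ at a level $\rho$ calibrated to $\|\max_i G(X_i)\|_{\psi_1}$, applies Talagrand/Bousquet (or Klein--Rio for the lower tail) to the bounded piece, and controls the tail piece deterministically via $n^{-1}\sum_i G(X_i)\mathbf{1}_{\{G(X_i)>\rho\}}$ together with a moment estimate for the maximum of subexponential variables. Your proposal instead tries to run the modified log-Sobolev/Herbst argument from scratch on the full unbounded process and only brings in truncation at the end as ``bookkeeping.'' That route is shakier: the perturbation bound $|Z-Z^{(i)}|\leq n^{-1}(G(X_i)+G(X_i'))$ feeds into $\phi(-\lambda(Z-Z^{(i)}))$, and controlling $\sum_i \E[\phi(\cdot)e^{\lambda Z}]$ by the $\psi_1$-norm of a single $\max_i G(X_i)$ without a $\log n$ loss is exactly the hard step that truncation-first is designed to sidestep, not something the entropy method delivers automatically. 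Also, your explanation for the factor $K_2^{-1}$ in the lower tail (a ``contraction-loss stemming from reducing to an auxiliary bounded class'') is not quite right; in Adamczak the multiplicative constants on $\E Z$ come from symmetrization/desymmetrization and from bounding the expectation of the truncated supremum by that of the full supremum, not from any intrinsic lower-tail deficiency.
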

\begin{lemma}[Lipschitz Concentration]\label{Lem:Lipschtiz}
Let $\xi_1,\ldots,\xi_n \underset{i.i.d}{\sim} N(0,1)$, and $f:\mathbb{R}^n\to\mathbb{R}$ be a $L$-Lipschitz function with respect to $\norm{\cdot }_2$. Then, for all $\eps > 0$,
\[
    \Pr( |f - \E[f]| \geq \eps) \leq \exp(-c\eps^2L^{-2}).
\]

\end{lemma}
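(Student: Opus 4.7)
The plan is to deduce this classical Gaussian concentration estimate by combining the Gaussian logarithmic Sobolev inequality with Herbst's argument. By a standard mollification (convolving $f$ with a Gaussian kernel of vanishing variance), I would first reduce to the case where $f$ is smooth while preserving $\|\nabla f\|_2 \le L$ almost everywhere; the tail bound for the smooth approximants then passes to the limit by dominated convergence.

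For the core step, I would invoke the Gaussian log-Sobolev inequality: for every smooth $g$, $\operatorname{Ent}_{\gamma_n}(g^2) \le 2\,\E\|\nabla g\|_2^2$, where $\gamma_n$ is the standard Gaussian on $\R^n$. Applying this to $g = e^{\lambda f/2}$ for $\lambda > 0$ and writing $\phi(\lambda) = \E e^{\lambda f}$, the pointwise bound $\|\nabla g\|_2^2 \le (\lambda^2 L^2/4)\,e^{\lambda f}$ converts the log-Sobolev estimate into
\[
\lambda \phi'(\lambda) - \phi(\lambda)\log \phi(\lambda) \le \tfrac{\lambda^2 L^2}{2}\,\phi(\lambda).
\]
Dividing by $\lambda^2 \phi(\lambda)$ yields Herbst's differential inequality $\tfrac{d}{d\lambda}\bigl[\lambda^{-1}\log \phi(\lambda)\bigr] \le L^2/2$, and integrating from $0^+$ (where the ratio tends to $\E f$) produces the sub-Gaussian moment generating function bound $\log \E e^{\lambda(f-\E f)} \le \lambda^2 L^2/2$ for every $\lambda > 0$. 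A Chernoff (Markov) argument then gives
\[
\Pr(f-\E f \ge \eps) \le \inf_{\lambda>0} e^{-\lambda\eps+\lambda^2 L^2/2} = e^{-\eps^2/(2L^2)},
\]
and repeating the derivation with $-f$ (also $L$-Lipschitz) followed by a union bound yields the claimed two-sided estimate with $c = 1/4$.

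The main obstacle I foresee is the regularity issue: Herbst's derivation formally requires $f \in C^1$, whereas the lemma only assumes Lipschitz continuity. This is resolved either by the mollification step above, or by invoking Rademacher's theorem to conclude that $\nabla f$ exists Lebesgue-almost-everywhere with $\|\nabla f\|_2 \le L$, after which the log-Sobolev inequality extends from smooth to Lipschitz functions by density. An alternative route bypassing log-Sobolev entirely is to apply Borell--Sudakov--Tsirelson Gaussian isoperimetry to the sublevel set $\{f \le \operatorname{med}(f)\}$: its Minkowski $(\eps/L)$-enlargement is contained in $\{f \le \operatorname{med}(f) + \eps\}$ by the Lipschitz hypothesis, so comparison with a half-space produces the same sub-Gaussian tail around the median, which one converts to a tail around the mean using the standard estimate $|\operatorname{med}(f) - \E f| \lesssim L$.
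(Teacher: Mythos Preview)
Your proposal is correct: the log-Sobolev plus Herbst argument (with the mollification/Rademacher justification for passing from smooth to Lipschitz $f$) is a standard and complete derivation of Gaussian Lipschitz concentration, and the Borell--Sudakov--Tsirelson alternative you sketch is equally valid.

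There is nothing to compare against, however, because the paper does not prove this lemma. It is listed among the ``Auxiliary Lemmas'' in the appendix alongside Sudakov minoration, Talagrand's inequality, and Adamczak's bound, all of which are quoted from the literature without proof and used as black boxes in the main arguments. So your write-up supplies a proof where the paper simply cites the result.
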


\section{Proofs}
\begin{proof}[Proof of Lemma \ref{Lem:1}]

We invoke the lower bound of Eq.~\eqref{eq:expanded_lower_bd_inner} with $g = f_0$ and $t=2$, implying
\begin{align}
    \label{eq:lem12_first}
         0 &\geq \EmpGaussAvgs(B_{n}(f_0,2)) - \EmpGaussAvgs(B_{n}(f_0,\widehat{r}))
   -\E \inner{\hat{f}_n-f_0,g_{\xi} - f_0}_n -Cn^{-1}  \notag\\
   &= \EmpGaussAvgs(\cF) - \EmpGaussAvgs(B_{n}(f_0,\widehat{r}))
   -\E \inner{\hat{f}_n-f_0,g_{\xi} -\Pi(g_{\xi})+\Pi(g_{\xi})-f_0}_n -Cn^{-1},
\end{align}
where $\Pi(g_{\xi}):= \argmin_{g \in \cM }\|g_{\xi}-g\|_{\PP_n}$, and the equality follows for the fact that for $\cF\subseteq [-1,1]^\cX$  we have $B_{n}(f_0,2) = \cF$.

Now, recall that $\cM$ is a maximal $6\sqrt{\GaussAvgsF}$-separated set with respect to  $L_2(\PP)$, and therefore also a $12\sqrt{\GaussAvgsF}$-net with respect to $L_2(\PP)$. Therefore, under the event $\cE$ it is also a  $16\sqrt{\GaussAvgsF}$-net with respect to $L_2(\PP_n)$, and, in particular, $\|\Pi(g_\xi)-g_\xi\|_{\PP_n} \leq  16\sqrt{\GaussAvgsF}.$
Hence, we can rewrite \eqref{eq:lem12_first} as
\begin{align*}
  &\E_{\xi} \max_{g \in \cM} \inner{\hat{f}_n-f_0,g-f_0}_n  \\
  &\geq  \EmpGaussAvgs(\cF)-\EmpGaussAvgs(B_{n}(f_0,\widehat{r}))-\E_{\xi}  \inner{\hat{f}_n-f_0,g_{\xi} -\Pi(g_{\xi})}_n -Cn^{-1}
  \\&\geq K_1^{-1}\GaussAvgsF-\EmpGaussAvgs(B_{n}(f_0,\widehat{r}))-16\sqrt{\GaussAvgsF}\E_{\xi}\|f-\hat{f}_n\|_{\PP_n} -Cn^{-1}
\end{align*}
Now, we proceed by using the first part of Corollary \ref{Cor:integrate} and the assumption of lying in $\cE$. The last expression is lower-bounded by
\begin{equation}\label{Eq:bb}
  K_1^{-1}\GaussAvgsF-\EmpGaussAvgs(B_{n}(f_0,\widehat{r}))-16\sqrt{\GaussAvgsF}(\widehat{r}+C\widehat{r}^{1/2}n^{-1/4}) .
\end{equation}
According to \eqref{eq:bd_on_rhat}, under the event $\cE$, we have $$\widehat{r} \leq C_3\sqrt{K_1\GaussAvgsF}$$
for some constant $C_3$. Thus the expression in Eq. \eqref{Eq:bb} is further lower-bounded by 
\begin{align*}
  & K_1^{-1}\GaussAvgsF-\EmpGaussAvgs(B_{n}(f_0,\widehat{r}))-16\sqrt{\GaussAvgsF}\widehat{r}-C_4\sqrt{K_1}\GaussAvgsF^{3/4}n^{-1/4}-Cn^{-1}
  \\&\geq (2K_1)^{-1}\GaussAvgsF-\EmpGaussAvgs(B_{n}(f_0,\widehat{r}))-16\sqrt{\GaussAvgsF}\widehat{r}
\end{align*}
where the last inequality holds when $n$ is large enough. To see this, recall that $\GaussAvgs(\cF) \geq c/\sqrt{n}$ and under this assumption both $ n^{-1}=o_n(\GaussAvgsF)$  and $\GaussAvgsF^{3/4}n^{-1/4}=o_n(\GaussAvgsF)$ hold. Therefore, the lemma follows.
\end{proof}
\begin{proof}[Proof of Lemma \ref{Lem:3}]
First, denote by $\|\PP_n - \PP\|_{\cH}:= \sup_{h \in \cH}|n^{-1}\sum_{i=1}^{n}h(X_i)-\E[h]|$, and for each $g_i \in \cM$, define $\cG_i=\{(h-f_0)(g_i-f_0): h \in B_{\PP}(f_0,t_{*})\}.$  By Talagrand's inequality (Lemma \ref{Lem:Talagrand}), the following holds for and $u \geq 0$
\begin{align*}
    \Pr\left(|\|\PP_n - \PP\|_{\cG_i} -\E\|\PP_n - \PP\|_{\cG_i}| \geq u \right) \leq K\exp\left(-nK^{-1}\log(1+ \frac{4^{-1}us_1^{-2}}{\GaussAvgsF})u \right)
\end{align*}
where we used the fact that $V^2 \leq \sup_{h \in \cF}\|g-f_0\|_{\infty}^2t_{*}^2 \leq 4s_1^{2}\GaussAvgsF$.
Now, we set $u =(16K_1)^{-1}\GaussAvgsF$ in the last equation
\begin{align*}
    &\Pr\left(|\|\PP_n - \PP\|_{\cG_i} -\E\|\PP_n - \PP\|_{\cG_i}| \geq (16K_1)^{-1}\GaussAvgsF \right) \\&\leq K\exp\left(-n(16K\cdot K_{1})^{-1}\GaussAvgsF \log(1+ 4^{-1}(16K_1)^{-1}s_{1}^{-2})\right).
\end{align*}

Next, we aim to take a union bound over $\cM$, and recall that $\log M \leq C_2K_1^2n\GaussAvgsF \leq C(K_1)n\GaussAvgsF$, for some absolute constant that does not depend on $s_1$. Therefore, we may choose \begin{equation}
s_1:=  c(K,K_1,C_2)
\end{equation}
where $c(K,K_1,C_2)$ is a constant that satisfies the following: 
\begin{equation*}
   \Pr\left(|\|\PP_n - \PP\|_{\cG_i} -\E\|\PP_n - \PP\|_{\cG_i}| \geq (16K_1)^{-1}\GaussAvgsF \right) \leq  K\exp(-2C_2K_1n\GaussAvgsF).
\end{equation*}
Therefore, we have
\begin{align*}
    \Pr\left(\exists  1 \leq i \leq M:|\|\PP_n - \PP\|_{\cG_i} -\E\|\PP_n - \PP\|_{\cG_i}| \geq (16K_1)^{-1}\GaussAvgsF \right) &\leq MK\exp(-2C_2K_1n\GaussAvgsF)) \\&\leq K\exp(-C_2K_1n\GaussAvgsF) \\&\leq K\exp(-n\GaussAvgsF).
\end{align*}
We conclude that with probability of at least $1-K\exp(-n\GaussAvgsF)$ the following holds for $\cG:= \{(h-f_0)(g-f_0) : g \in \cM, h \in B_{\PP}(f_0,t_{*})\}$:
\begin{equation}\label{Eq:b2}
    \|\PP_n -\PP\|_{\cG} \leq  \max_{1 \leq i \leq M}\E\|\PP_n -\PP\|_{\cG_i} + (16K_1)^{-1}\GaussAvgsF.
\end{equation}
The lemma will follow as soon as we  show that 
\[
        \max_{1 \leq i \leq M}\E\|\PP_n -\PP\|_{\cG_i} \leq (16K_1)^{-1}\GaussAvgsF.
\]
In order to prove the last inequality, we first apply the symmetrization lemma (cf. \citep[p. 20]{koltchinskii2011oracle}) and majorize the resulting Rademacher averages by a constant multiple of the Gaussian averages
\begin{equation}\label{Eq:b1}
    \E\|\PP_n -\PP\|_{\cG_i} \leq 8\GaussAvgs(\cG_i)
\end{equation}
where we used the fact that $0 \in \cG_i$.

Next, since $\|g_i-f_0\|_{\infty} \leq 2$, a standard argument (e.g. \cite[Theorem 3.1.17]{gine2016mathematical}) gives
\begin{align*}
     \E_{\xi} \sup_{h \in B_{\PP}(f_0,t_{*})} n^{-1}\sum_{k=1}^{n}(h-f_0)(g_i-f_0)(X_k)\xi_k &\leq  2\E_{\xi} \sup_{h \in B_{\PP}(f_0,t_{*})} n^{-1}\sum_{k=1}^{n}(h-f_0)(X_i)\xi_k.
\end{align*}
Then, by taking expectation over $X_1,\ldots,X_n$ over the last equation and by Eq. \eqref{Eq:b1}, we conclude  
\[
         \E\|\PP_n -\PP\|_{\cG_i} \leq 16\GaussAvgs(B_{\PP}(f_0,t_{*})) \leq 16l_{\xi}\GaussAvgsF\leq (16K_1)^{-1}\GaussAvgsF,
\]
where we set $l_{\xi} = (256K_1)^{-3}$. Then, by Eq. \eqref{Eq:b2} and  the last equation, the claim follows.
\end{proof}
\begin{proof}[Proof of Lemma \ref{Lem:EventStand}]
It is enough to show that $\cE_2$, $\cE_3$ hold with probability of at least $0.99$ for $n$ large enough.  First, we prove this claim for $\cE_2$.

We aim to apply Adamczak bound for concentration of the suprema of unbounded empirical processes (Lemma~\ref{Lem:Adam}). For this purpose, define the family of functions $\cG:= \{yf(x),y \in \R, f\in \cF-f_0\}$, and the distribution $\QQ = \PP \otimes N(0,1)$. Note that  $\cF \subseteq [-1,1]^{\cX}$ and, $\xi$ is Gaussian. Therefore, by Pisier's inequality (cf. \cite{pisier1983some},\citep[Eq. 13]{adamczak2008tail}), we have $$\|\max_{1\leq i \leq n}|\xi_if(X_i)|\|_{\psi_1} \leq C\log(n)\max_{1\leq i\leq n} \||\xi_if(X_i)|\|_{\psi_1} \leq  C_2\log(n).$$
By Adamczak's bound (Lemma \ref{Lem:Adam}), 
\begin{equation}\label{Eq:simEq}
\begin{aligned}
    &K_{2}^{-1}\E_{x,\xi} \sup_{f \in \cF-f_0} |\frac{1}{n}\sum_{i=1}^{n}f(X_i)\xi_i|  -\frac{10}{\sqrt{n}} - \frac{C\log(n)}{n} \\
    &\leq \sup_{f \in \cF-f_0}|\frac{1}{n}\sum_{i=1}^{n}f(X_i)\xi_i| \leq K_{2}\E_{x,\xi}\sup_{f \in \cF-f_0} |\frac{1}{n}\sum_{i=1}^{n}f(X_i)\xi_i|  + \frac{10}{\sqrt{n}} + \frac{C\log(n)}{n},
\end{aligned}
\end{equation}
with probability of at least $0.99$ both $X_1,\ldots,X_n$ and $\vec\xi$.

Now, using the average principle, for $n$ large enough,  we can find an event $\cE_{7}$ (that depends only on $X_1,\ldots,X_n$)  that holds with probability $0.98$, such that for any fixed $\Xn \in \cE_{7}$, there exists an event $\cA_3(\Xn)$  of probability at least $0.98$ (over $\vec\xi$) such that Eq. \eqref{Eq:simEq} holds. For each $\Xn \in \cE_7$, Lemma \ref{Lem:Lipschtiz} (with Lipschitz constant $\sup_{f \in \cF}\|f-f_0\|_{n} \leq 2$) implies that the middle term in \eqref{Eq:simEq} is, with high probability, within $C n^{-1/2}$ from its expectation (with respect to $\vec{\xi}$).
Therefore, we have for all $\Xn \in \cE_7$:
\begin{align*}
    &K_{2}^{-1}\E_{x,\xi} \sup_{f \in \cF-f_0} |n^{-1}\sum_{i=1}^{n}f(X_i)\xi_i|  -\frac{C}{\sqrt{n}}\\
    &\leq \E_{\xi}\sup_{f \in \cF-f_0}|n^{-1}\sum_{i=1}^{n}f(X_i)\xi_i| 
    \leq K_{2}\E_{x,\xi}\sup_{f \in \cF-f_0} |n^{-1}\sum_{i=1}^{n}f(X_i)\xi_i|  + \frac{C}{\sqrt{n}}. 
\end{align*}
Finally, since $ 0 \in \cF-f_0$, we have
\[
    \E_{\xi}\sup_{f \in \cF-f_0} n^{-1}\sum_{i=1}^{n}f(X_i)\xi_i \leq \E_{\xi}\sup_{f \in \cF-f_0} |n^{-1}\sum_{i=1}^{n}f(X_i)\xi_i| \leq 2\E_{\xi}\sup_{f \in \cF-f_0} n^{-1}\sum_{i=1}^{n}f(X_i)\xi_i.
\]
Hence, the last two equations imply that  when $\GaussAvgsF \geq C_1n^{-1/2}$, for $C_1$ that is large enough, the claim follows for $\cE_2$. To handle the remaining case of $\GaussAvgsF \leq C_1n^{-1/2}$, recall that we assumed that our class is not degenerate (i.e it has two functions that are $\|f_1-f_2\|_{\PP} \geq 0.5$. Then, it is easy to see that with probability of $0.99$ it holds that
\[
    \EmpGaussAvgs(\cF-f_0) \geq \GaussAvgs(\{0,f_2-f_0,f_1-f_0\}) \geq \E\max \{n^{-0.5}g,0\} \geq  c\cdot n^{-1/2} \geq c\cdot C_1^{-1}\GaussAvgsF,
\]
where $g \sim N(0,1/4)$. Therefore, for some $K_1^{-1} = c(K_2,c)$, the claim follows for $\cE_2$.

Next, we handle $\cE_3$. By using the definition of $B_{\PP}(f_0,t_{*})$,  and similar considerations that led to Eq. \eqref{Eq:simEq}, we have
\begin{equation}\label{Eq:simEq2}
\begin{aligned}
    \sup_{f \in B_{\PP}(f_0,t_{*})-f_0}|n^{-1}\sum_{i=1}^{n}f(X_i)\xi_i| \leq K_{2}\E_{x,\xi}\sup_{B_{\PP}(f_0,t_{*})-f_0} |n^{-1}\sum_{i=1}^{n}f(X_i)\xi_i|  + \frac{10t_{*}}{\sqrt{n}} + \frac{C\log(n)}{n},
\end{aligned}
\end{equation}
with probability of at least $0.99$ over  both $X_1,\ldots,X_n$ and $\vec\xi$.

As above, for $n$ large enough,  we can find an event $\cE_8 \subseteq \cE_1$ (where $\cE_1$ is defined in Eq.~\eqref{Eq:E1})  of probability at least  $0.98$ (over $X_1,\ldots,X_n$), such that for any $\Xn \in \cE_8 $, there exists an event $\cA_4(\Xn)$ of probability at least $0.98$ (over $\vec\xi$) such that \eqref{Eq:simEq2} holds. 
Then, similarly to the case of $\cE_2$, we will employ Lipschitz concentration for the middle term  in \eqref{Eq:simEq2}, for each $\Xn \in \cE_8$. To estimate the Lipschitz constant, recall that under  $\cE_1$ (more precisely, under the event of Lemma~\ref{Lem:Biso}), we also have that $$\|f-f_0\|_{n}^2  \leq s_1^2 \GaussAvgsF +10\GaussAvgsF \leq 11\GaussAvgsF$$ for all $f \in B_{\PP}(f_0,t_{*})$, under the choice $t_*$ in \eqref{Eq:tstar}. Then, using the fact that $\cA_4(\Xn)$ holds with probability of at least $0.98$, and Lemma \ref{Lem:Lipschtiz} with Lipschitz constant $\sup_{f \in B_{\PP}(f_0,t_{*})}\|f-f_0\|_{\PP_n} \leq \sqrt{11\GaussAvgsF}$, imply that for each $\Xn \in \cE_8$, the middle term in \eqref{Eq:simEq2} is within an additive factor of  $C_1\sqrt{\GaussAvgsF}n^{-1/2}$ from its expectation over $\vec{\xi}$.  Namely, we have for all $\Xn \in \cE_8$:
\begin{align*}
\E_{\xi}\sup_{f \in \cF-f_0}|n^{-1}\sum_{i=1}^{n}f(X_i)\xi_i| 
    \leq K_{2}\E_{x,\xi}\sup_{f \in \cF-f_0} |n^{-1}\sum_{i=1}^{n}f(X_i)\xi_i|  + \frac{C\sqrt{\GaussAvgsF}}{\sqrt{n}}. 
\end{align*}
 where we used the fact that $t_{*} \leq s_1\sqrt{\GaussAvgsF}$. The claim for $\cE_3$ follows by similar considerations that we used earlier. 
\end{proof}

\begin{proof}[Proof of Corollary \ref{Cor:Donsker}]
For any $\PP$-Donsker class we have with probability at least $0.9$ \citep[Chap. 5]{van2000empirical}
$$ \EmpGaussAvgs(\cF) \sim \GaussAvgsF \sim n^{-1/2}.$$
Then, by Corollary \ref{cor:fixed_design}, we have that
\[  
   \E\int (\hat{f}_n - f_0)^2d\PP_n \gtrsim n^{-1} .
\]
In order to prove the second part of the bound, we apply Theorem \ref{Thm:URandomDesign}, 
\[
    \E\int (\hat{f}_n - f_0)^2d\PP \gtrsim \max\{n^{-1},t_{n,\PP}(f_0,\cF)^2\}.
\]
The corollary will follow if we show that for any $f_0 \in \cF$, we have that $t_{n,\PP} \gtrsim 1$. To see this, we use \citep[Thm 5.11]{van2000empirical} that shows that for all $ t \geq 0$, we have
\[
    \GaussAvgs(B_{\PP}(f_0,t)) \lesssim n^{-1/2}\int_{0}^{t}u^{-\alpha/2}du \lesssim t^{\frac{-\alpha+2}{2}}n^{-1/2}.
\]
Since $\alpha \in (0,2)$, the right hand side is decreasing in $t$, therefore we know that if
\[
    \GaussAvgs(B_{\PP}(f_0,t_{*})) \gtrsim \GaussAvgsF \gtrsim n^{-1/2}
\]
then we have $t_{*} \gtrsim 1$. Hence, $t_{n,\PP}(f_0,\cF) \gtrsim 1$, and the claim follows.
\end{proof}
\begin{proof}[Proof of Corollary \ref{Cor:NonDonsker}]
For any non $\PP$-Donsker class we have with probability of at least $0.9$ \citep[Chap. 5]{van2000empirical}
$$  n^{-\frac{2}{2+\alpha}} \lesssim \EmpGaussAvgs(\cF) \sim \GaussAvgsF \lesssim n^{-\frac{1}{\alpha}}.$$
Then, by Corollary \ref{cor:fixed_design}, we have that
\[  
   \E\int (\hat{f}_n - f_0)^2d\PP_n \gtrsim n^{-\frac{4}{2+\alpha}},
\]
and the claim follows.
\end{proof}

\subsection{An example to  the tightness of Theorem \ref{Thm:URandomDesign} (a sketch) }\label{Sec:SimpleExample}
Let $\PP$ be the uniform density of $[0,1]$, and denote by $I(x_i,l_i)$ to be an interval with center $x_i$ and length $l_i$. For each $m \geq 0$ we define \begin{align*}
    \cF_{m}&:= \big\{ m^{-1/6}\sum_{i=1}^{m}\eps_i 1_{I(x_i,m^{-5/4})}: \forall x_1,\ldots,x_m \text{ s.t. } 1 \leq  j \ne k \leq m \ \ I(x_k,m^{-5/4}) \cap I(x_j,m^{-5/4}) = \emptyset, \\&  \quad \forall (\eps_1,\ldots,\eps_m) \in \{-1,1\}^m\big\}.\end{align*}
Now, we define $\cF:= \mathrm{conv}\{0,\{\cF_{m}\}_{m=1}^{\infty}\}$. Clearly, this family is uniformly bounded by one. Also, we assume that $f_0 = 0$.

Using a classical fact, we have that  with probability of at least $1-n^{2}$,  $$\max_{1\leq i \ne j \leq n}|X_j-X_i| \geq c\cdot(n\log n)^{-1},$$ and denote this event by $A$.
Clearly, for each $ \Xn \in A$, we can find a function $f_{\xi} \in \cF_{n}$ (that depends on $\Xn$ as well) such that
\begin{equation}\label{Eq:bb1}
    \inner{f_{\xi},\vec{\xi}}_n = n^{-1/6}\cdot n^{-1}\sum_{i=1}^{n}|\xi_i|.
\end{equation}
Also, note that under the event $A$,  $\hat{f}_n \notin \{\cF_{m}\}_{m=n+1}^{\infty}$. Therefore, one can easily show that 
\[
    \EmpGaussAvgs(B_{n}(f_0,n^{-1/6})) \sim n^{-1/6}.
\]
Now, denote by $C(n):= C_1(n\log(n))^{4/5} $ for $C_1$ that is large enough.  Note that any
$ \cF_{m} $ such that $ C(n) \leq m \leq n-1$, we can only place  $m$ intervals with length of at most $(c/2)\cdot (n\log(n))^{-1}$. Therefore, under the event $A$, each of these intervals has at most  one point. Hence, we have that
\[
    \max_{f_m \in \cF_{m}}\inner{f_m,\vec{\xi}}_n =  m^{-1/6}n^{-1}\max_{S \in \binom{n}{m},|S|=m} \sum_{i\in S}|\xi_i|.
\]
Now, for any fixed $ C(n)\leq m \leq n-1$, one can easily show by standard concentration inequalities that
\begin{equation}\label{Eq:bb2}
    \E\max_{f_m \in \cF_{m}}\inner{f_{m},\vec{\xi}}_n  \lesssim m^{-1/6}(m/n) + m^{-1/6}(m/n)\sqrt{\log(n/m)} \lesssim m^{-1/6}(m/n)\sqrt{\log(n/m)}.
\end{equation}
In the remaining case of $m \leq C(n)$, using some standard arguments, it can be shown that with probability of at least $1-n^2$ (over $X_1,\ldots,X_n$) the following holds: 
\begin{equation}\label{Eq:bb3}
\begin{aligned}
   \E_{\xi}\sup_{f_m \in \cF_m}\inner{f_m,\vec{\xi}}_n \sim \E_{x,\xi}\sup_{f_m \in \cF_m}\inner{f_m,\vec{\xi}}_n
    & \ll n^{-1/6}. 
\end{aligned}
\end{equation}
By using Eqs. \eqref{Eq:bb1},\eqref{Eq:bb2},\eqref{Eq:bb3}, one can show that with high probability  $\hat{f}_n \in B_n(f_0,Cn^{-1/6}),$ for some $C \geq 0$,  and also 
$$
    \GaussAvgsF \sim n^{-1/6}.
$$
Therefore, one can conclude that 
\[
     \E\int (\hat{f}_n - f_0)^2d\PP_n \sim n^{-\frac{1}{3}} \sim \GaussAvgsF^2,
\]
and 
\[
     \E\int (\hat{f}_n - f_0)^2d\PP \sim n^{-(\frac{1}{4}+\frac{1}{3})} \ll \GaussAvgsF^2 \sim  n^{-\frac{1}{3}}.
\]
Finally, it is easy to see that $t_{n,\PP}(f_0,\cF) \gtrsim n^{-(\frac{1}{8}+\frac{1}{6})}$, and therefore, by using the last equation 
\[
    \E\int (\hat{f}_n - f_0)^2d\PP \sim t_{n,\PP}(f_0,\cF)^2,
\]
and the claim follows.

\end{document}